\newcommand{\ad}{\ensuremath{\mathrm{ad}}}
\newcommand{\Aut}{\ensuremath{\mathrm{Aut}}}
\newcommand{\NN}{\ensuremath{\mathbb{N}}}
\newcommand{\ZZ}{\ensuremath{\mathbb{Z}}}
\newcommand{\QQ}{\ensuremath{\mathbb{Q}}}
\newcommand{\CC}{\ensuremath{\mathbb{C}}}
\newcommand{\la}{\ensuremath{\langle}}
\newcommand{\ra}{\ensuremath{\rangle}}
\newcommand{\liea}[1]{\ensuremath{\mathfrak{#1}}}
\newcommand{\im}{\operatorname{im}}
\newcommand{\mL}{\mathcal{L}}
\newcommand{\mF}{\mathcal{F}}
\newcommand{\mI}{\mathcal{I}}
\newcommand{\gr}{\operatorname{gr}}
\newcommand{\gKM}{\liea{g}_{\text{KM}}}
\newtheorem{thm}{Theorem}
\newtheorem{lm}[thm]{Lemma}
\theoremstyle{definition}
\newtheorem{re}[thm]{Remark}
\theoremstyle{definition}
\newenvironment{ex}[1]{\vspace*{.2cm}\noindent\textbf{#1}.} {\vspace*{.2cm}}
\def\cprime{$'$}
\begin{document}
\title{Constructing simply laced Lie algebras from extremal elements}
\date{\today}
\author[J.~Draisma]{Jan Draisma}
\address[Jan Draisma]{
Department of Mathematics and Computer Science\\
Technische Universiteit Eindhoven\\
P.O. Box 513, 5600 MB Eindhoven, The Netherlands}
\thanks{The first author is supported by DIAMANT, an NWO
mathematics cluster.}
\thanks{
The second author is supported by NWO Ph.D. grant 10002490.}
\email{j.draisma@tue.nl}

\author[J.~C.~H.~W.~in~'t~panhuis]{Jos in 't panhuis}
\address[Jos in 't panhuis]{
Department of Mathematics and Computer Science\\
Technische Universiteit Eindhoven\\
P.O. Box 513, 5600 MB Eindhoven, The Netherlands}
\email{j.c.h.w.panhuis@tue.nl}
\maketitle

\begin{abstract}
For any finite graph $\Gamma$ and any field $K$ of characteristic
unequal to $2$ we construct an algebraic variety $X$ over $K$ whose
$K$-points parameterise $K$-Lie algebras generated by extremal elements,
corresponding to the vertices of the graph, with prescribed commutation
relations, corresponding to the non-edges. After that, we study the case
where $\Gamma$ is a connected, simply laced Dynkin diagram of finite
or affine type. We prove that $X$ is then an affine space, and that all
points in an open dense subset of $X$ parameterise Lie algebras isomorphic
to a single fixed Lie algebra. If $\Gamma$ is of affine type, then this
fixed Lie algebra is the split finite-dimensional simple Lie algebra
corresponding to the associated finite-type Dynkin diagram. This gives a
new construction of these Lie algebras, in which they come together with
interesting degenerations, corresponding to points outside the open dense
subset. Our results may prove useful for recognising these Lie algebras.
\end{abstract}

\section{Introduction and main results} \label{sec:Introduction}

An {\em extremal element} of a Lie algebra $\mathcal{L}$ over a
field $K$ of characteristic unequal to $2$ is an element $x \in
\mathcal{L}$ for which $[x,[x,\mathcal{L}]]\subseteq Kx$. A {\em
sandwich element} is an $x \in \mathcal{L}$ satisfying the stronger
condition $[x,[x,\mathcal{L}]]=0$.  The definition of extremal elements in
characteristic $2$ is more involved, which is one reason for restricting
ourselves to characteristics unequal to $2$ here. Extremal elements and
sandwich elements play important roles in both classical and modern Lie
algebra theory. In complex simple Lie algebras, or their split analogues
over other fields, extremal elements are precisely the elements that are
long-root vectors relative to some maximal torus. Sandwich elements are
used in the classification of simple Lie algebras in small characteristics
\cite{Premet97}; they occur in the modular Lie algebras of Cartan type,
such as the Witt algebras. Sandwich elements were originally introduced
in relation with the restricted Burnside problem \cite{Kostrikin81}. An
important insight for the resolution of this problem is the fact that a
Lie algebra generated by finitely many sandwich elements is necessarily
finite-dimensional. While this fact was first only proved under extra
assumptions, in \cite{Zelmanov90} it is proved in full generality. We
will use this result in what follows.

The prominence of extremal elements in the work of Kostrikin and
Zel{\cprime}manov and in modular Lie algebra theory led to the natural
problem of describing all Lie algebras generated by a fixed number of
extremal elements \cite{Cohen01,panhuis07,Postma07,Roozemond05}.

\begin{ex}{Example} 
Suppose
that we want to describe all Lie algebras $\mathcal{L}$ generated by two extremal
elements $x$ and $y$. Since $[x,[x,y]]$ is a scalar multiple $ax$
of $x$ and $[y,[x,y]]=-[y,[y,x]]$ is a scalar multiple $-by$ of $y$,
$\mL$ is spanned by $x,y,[x,y]$. There may be linear dependencies between
these elements, but let us assume that they are linearly independent. Then
\[ a[y,x]=[y,[x,[x,y]]]=[[y,x],[x,y]]+[x,[y,[x,y]]]=0-b[x,y]=b[y,x],
\]
and since we have assumed that $[x,y] \neq 0$, we find that $a=b$. Hence
three-dimensional Lie algebras with a distinguished pair of extremal
generators are parameterised by the single number $a$. Moreover, all
algebras with $a \neq 0$ are mutually isomorphic and isomorphic to the
split simple Lie algebra of type $A_1$, while the algebra with $a=0$ is
nilpotent and isomorphic to the three-dimensional Heisenberg algebra. This
is a prototypical example of our results. The next smallest case of
three generators is treated in \cite{Cohen01,Zelmanov90} and also by
our results below. There the generic Lie algebra is split of type $A_2$
and more interesting degenerations exist.
\end{ex}

We now generalise and formalise this example to the case of more
generators, where we also allow for the flexibility of prescribing
that certain generators commute. Thus let $\Gamma$ be a finite simple
graph without loops or multiple edges. Let $\Pi$ be the vertex set of
$\Gamma$ and denote the neighbour relation by $\sim$. Fixing a field $K$
of characteristic unequal to $2$, we denote by $\mF$ the quotient of the
free Lie algebra over $K$ generated by $\Pi$ modulo the relations
\[ [x,y]=0 \text{ for all } x,y \in \Pi \text{ with } x\not \sim y. \]
So $\mF$ depends both on $\Gamma$ and on $K$, but we will not make this
dependence explicit in the notation. We write $\mF^*$ for the space of all
$K$-linear functions $\mF \rightarrow K$. For every $f\in (\mF^*)^{\Pi}$,
also written $(f_x)_{x\in\Pi}$, we denote by $\mL(f)$ the quotient
of $\mF$ by the ideal $\mI(f)$ generated by the (infinitely many) elements
\begin{equation} \label{eq:Extremal}
[x,[x,y]]-f_x(y)x,\  x \in \Pi, y \in \mF.
\end{equation}
By construction $\mL(f)$ is a Lie algebra generated by extremal elements,
corresponding to the vertices of $\Gamma$, which commute when they are
not connected in $\Gamma$. The element $f_x$ is a parameter needed to
express the extremality of $x \in \Pi$. If $\Gamma$ is not connected,
then both $\mF$ and $\mL(f)$ naturally split into direct sums over
all connected components of $\Gamma$, so it is no restriction to assume
that $\Gamma$ is connected; we will do so throughout this paper.

In the Lie algebra $\mL(0)$ the elements of $\Pi$ map to
sandwich elements, hence by \cite{Zelmanov90} this Lie algebra is
finite-dimensional. For general $f\in (\mF^*)^{\Pi}$ it turns
out that $\dim \mL(f) \leq \dim \mL(0)$; see \cite{Cohen01} or
the proof of Theorem \ref{thm:Variety} below. It is therefore natural to
focus on the Lie algebras $\mL(f)$ of the maximal possible dimension
$\dim \mL(0)$.  This leads us to define the set
\[ X:=\{f\in (\mF^*)^{\Pi}
    \mid \dim \mL(f) = \dim \mL(0)\}, \]
the parameter space for all maximal-dimensional Lie algebras of the
form $\mL(f)$.

\begin{ex}{Example}
In the two-generator case above $\Gamma$ is the
graph with two vertices joined by an edge. The sandwich algebra $\mL(0)$
is the three-dimensional Heisenberg algebra, and the condition that
$\dim \mL(f)=3$ corresponds to our assumption above that $x,y,[x,y]$ be
linearly independent. This linear independence forced the parameters $a$
and $b$ to be equal. Here $X$ is the affine line with coordinate $a$. All
Lie algebras corresponding to points $a \neq 0$ are mutually isomorphic.
\end{ex}

Our first main result is that $X$ carries a natural structure of
an affine algebraic variety. To specify this structure we note that
$\mI(0)$ is a homogeneous ideal relative to the natural $\NN$-grading
that $\mF$ inherits from the free Lie algebra generated by $\Pi$.

\begin{thm} \label{thm:Variety}
The set $X$ is naturally the set of $K$-rational points of an
affine variety of finite type defined over $K$. This variety can be
described as follows. Fix any finite-dimensional homogeneous subspace $V$ of
$\mF$ such that $V + \mI(0)=\mF$. Then the restriction map
\[ X \rightarrow (V^*)^\Pi,\ f \mapsto (f_x|_V)_{x \in \Pi} \]
maps $X$ injectively onto the set of $K$-rational points of a closed
subvariety of $(V^*)^\Pi$. This yields a $K$-variety structure on $X$
which is independent of the choice of $V$.
\end{thm}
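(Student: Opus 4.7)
Assume first that $V$ is a \emph{homogeneous complement} of $\mI(0)$ in $\mF$, so that $\dim V = \dim \mL(0)$; independence of the choice of $V$ will be addressed at the end. The plan has three parts: (i) establish the universal bound $\dim \mL(f) \leq \dim \mL(0)$ via a gradation argument, (ii) package the ideals $J_g \subseteq \mF$ defined below into a coherent sheaf on $(V^*)^\Pi$, and (iii) use upper semicontinuity of fiber dimension to cut out the image of $X$ as a closed subvariety.

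For (i), I would filter $\mL(f)$ by $F_n := (\mF_{\leq n} + \mI(f))/\mI(f)$. Each generator $[x,[x,y]] - f_x(y) x$ of $\mI(f)$ has $\NN$-leading term $[x,[x,y]] \in \mI(0)$ (for homogeneous $y$ of positive degree), so $\gr \mL(f)$ is a graded quotient of $\mL(0)$, giving the bound. A parallel induction on degree also shows $V + \mI(f) = \mF$: a homogeneous $r \in \mI(0)_n$ with $n \geq 3$ is a sum of iterated adjoints $\ad(u_1)\cdots\ad(u_k)[x,[x,y']]$, which modulo $\mI(f)$ reduce to $f_x(y')\,\ad(u_1)\cdots\ad(u_k)(x)$ of strictly smaller degree. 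Hence $V$ always surjects onto $\mL(f)$, and $f \in X$ iff $V \cap \mI(f) = 0$.

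For (ii), introduce $R := K[(V^*)^\Pi]$, the tautological element $g \in (V^*)^\Pi \otimes R$, $\mF_R := \mF \otimes_K R$, and the ideal $J_{\mathrm{univ}} \subseteq \mF_R$ generated by $[x,[x,v]] - g_x(v) x$ as $x$ ranges over $\Pi$ and $v$ over a $K$-basis of $V$. Running the degree reduction from (i) $R$-linearly, $M := \mF_R/J_{\mathrm{univ}}$ is generated over $R$ by the image of $V$, hence is a coherent $R$-module (since $R$ is noetherian). Fiberwise, $M \otimes_R k(g) = \mF/J_g$, and (i) applied to $J_g$ yields $\dim_K \mF/J_g \leq \dim \mL(0) = \dim V$ for every $g$. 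For (iii), upper semicontinuity of fiber dimension then makes
\[
Y := \{g \in (V^*)^\Pi : \dim_K \mF/J_g = \dim \mL(0)\} = \{g : V \cap J_g = 0\}
\]
a closed subvariety. The bijection $X \leftrightarrow Y$ is given by restriction $f \mapsto f|_V$ in one direction; for $f \in X$, the inclusion $J_{f|_V} \subseteq \mI(f)$ and $\dim \mF/J_{f|_V} \leq \dim \mL(f)$ force $J_{f|_V} = \mI(f)$, and $f$ is recovered from $f|_V$ via $f_x(y)\,x \equiv [x,[x,y]] \pmod{\mI(f)}$. Conversely, for $g \in Y$ the canonical identification $\pi_g : \mF \twoheadrightarrow \mF/J_g \xrightarrow{\sim} V$ makes $f_x := g_x \circ \pi_g$ a linear form on $\mF$, and a direct check gives $\mI(f) = J_g$ and $f|_V = g$. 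Independence of $V$ follows from performing the analogous construction for any other admissible $V'$ (which contains a homogeneous complement $V_0$ of $\mI(0)$) and matching the resulting subvarieties via the natural polynomial isomorphism induced by the same universal reductions.

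The main obstacle is the coherence step: assembling the family $\{J_g\}_g$ into a single coherent $R$-module so that closedness of $Y$ reduces to the standard upper semicontinuity of fiber dimension. This hinges on verifying that the degree-reduction lemma of (i) goes through robustly over $R$, which it does because the defining relations are polynomial in $g$; once this is in place, the bijection $X \cong Y$ and the independence of $V$ reduce to routine linear-algebra checks.
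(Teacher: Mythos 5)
Your argument is correct, and while the reduction step (the degree induction showing that $V$ spans $\mF$ modulo $\mI(f)$, respectively modulo $J_g$, which rests on $\gr\mI(f)\supseteq\mI(0)$) coincides with the paper's, your treatment of closedness is genuinely different. The paper constructs, by the same induction, explicit polynomial maps $P_u:(V^*)^\Pi\to V$ and $Q_{x,u}:(V^*)^\Pi\to K$ with $P_u(f|_V)=\pi_f(u)$ and $Q_{x,u}(f|_V)=f_x(u)$, deduces injectivity from the $Q_{x,u}$, and then characterises the image of $X$ as the locus of tuples $h$ for which the candidate bracket $[u,v]_h:=P_{[u,v]}(h)$ on $V$ is a Lie bracket generated by the images of $\Pi$ with $[x,[x,u]_h]_h=Q_{x,u}(h)x$ --- all polynomial, hence closed, conditions. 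You instead pass to the universal family: the finitely generated $R$-module $M=\mF_R/J_{\mathrm{univ}}$, whose fibres are the $\mF/J_g$, and obtain closedness of $Y=\{g:\dim\mF/J_g=\dim\mL(0)\}$ from semicontinuity of fibre dimension together with the universal bound $\dim\mF/J_g\le\dim V$; the identifications $J_{f|_V}=\mI(f)$ for $f\in X$ (by the dimension squeeze) and $\mI(f)=J_g$ for $f_x:=g_x\circ\pi_g$ when $g\in Y$ (note that extremality for \emph{all} of $\mF$, not just for $V$, does follow, since $[x,[x,y]]\equiv[x,[x,\pi_g(y)]]$ modulo the ideal $J_g$) then give the bijection. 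Your route is slicker for closedness --- one never has to verify that the Jacobi/generation/extremality conditions exactly cut out the image, and $Y$ appears naturally as the degeneracy locus of a presentation matrix --- at the cost of invoking noetherianity and semicontinuity rather than staying elementary. Two things are worth flagging: first, the explicit $P_u$ and $Q_{x,u}$ are not just a device for this theorem but are reused in Section 4 (e.g.\ Lemma \ref{lm:Expressible} and the embeddings into $K^\Sigma$), so you will want to note that your universal reduction produces them anyway ($P_u(g)$ is the class of $u$ in $V\otimes R$ and $Q_{x,u}(g)=g_x(P_u(g))$); second, your independence-of-$V$ paragraph is compressed --- the nontrivial direction (recovering $f|_U$ from $f|_V$ for $U\supseteq V$) needs exactly these polynomial maps, as in the paper --- but the ingredients are all present in your construction.
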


\begin{figure}
\subfigure[$A_n^{(1)}$]{\includegraphics[scale=.3]{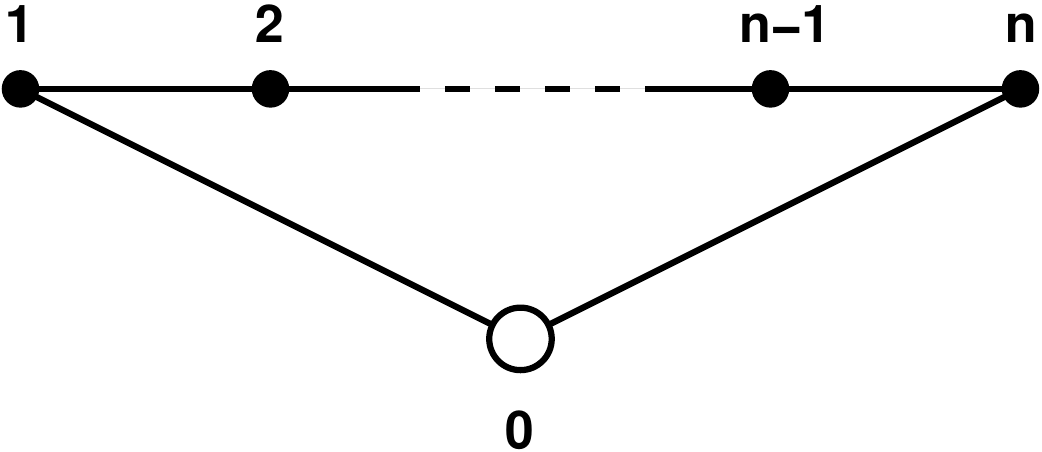}\label{fig:An}} \hfill
\subfigure[$D_n^{(1)}$]{\includegraphics[scale=.3]{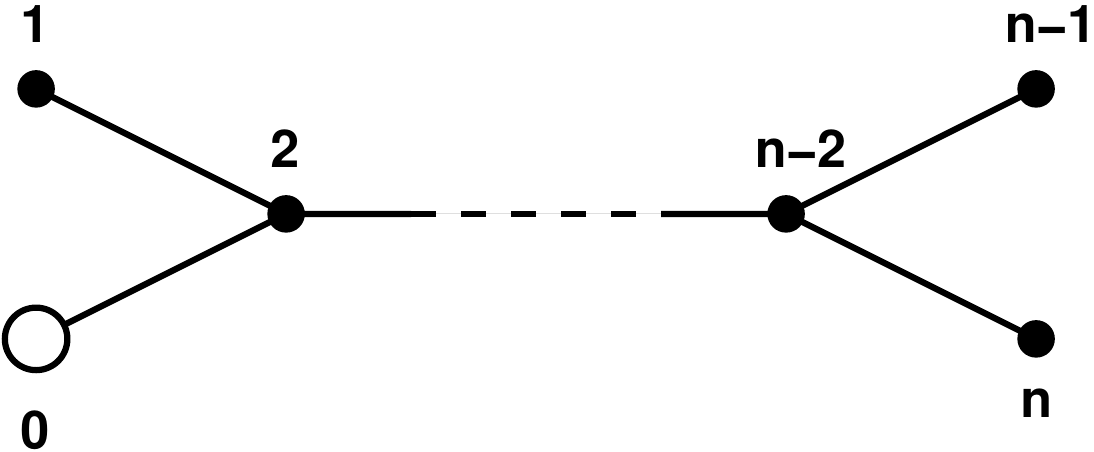}\label{fig:Dn}} \hfill
\subfigure[$E_6^{(1)}$]{\includegraphics[scale=.3]{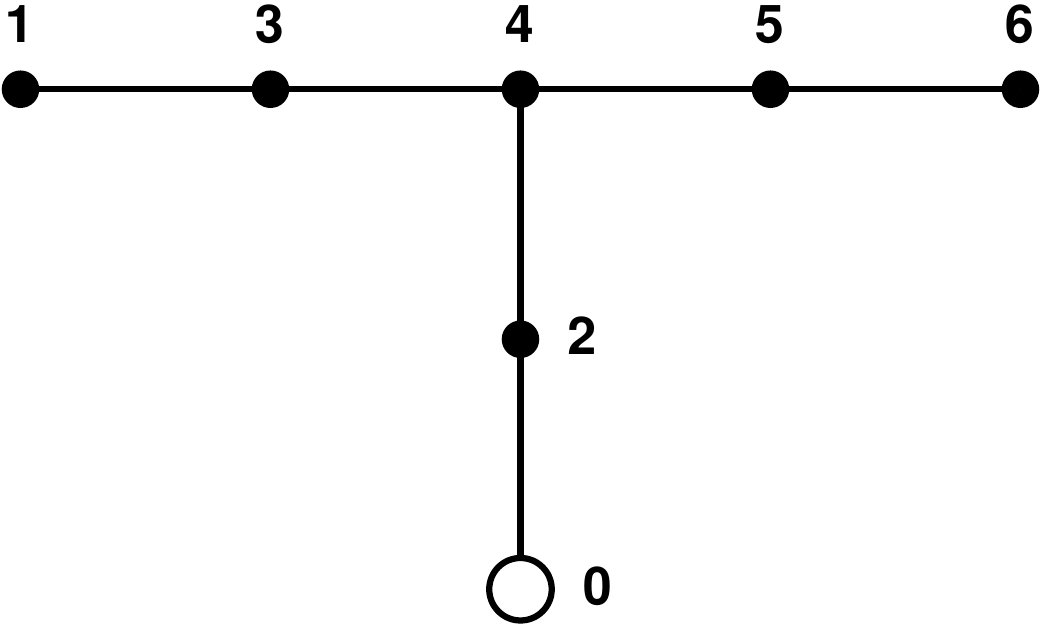}\label{fig:E6}} \hfill
\subfigure[$E_7^{(1)}$]{\includegraphics[scale=.3]{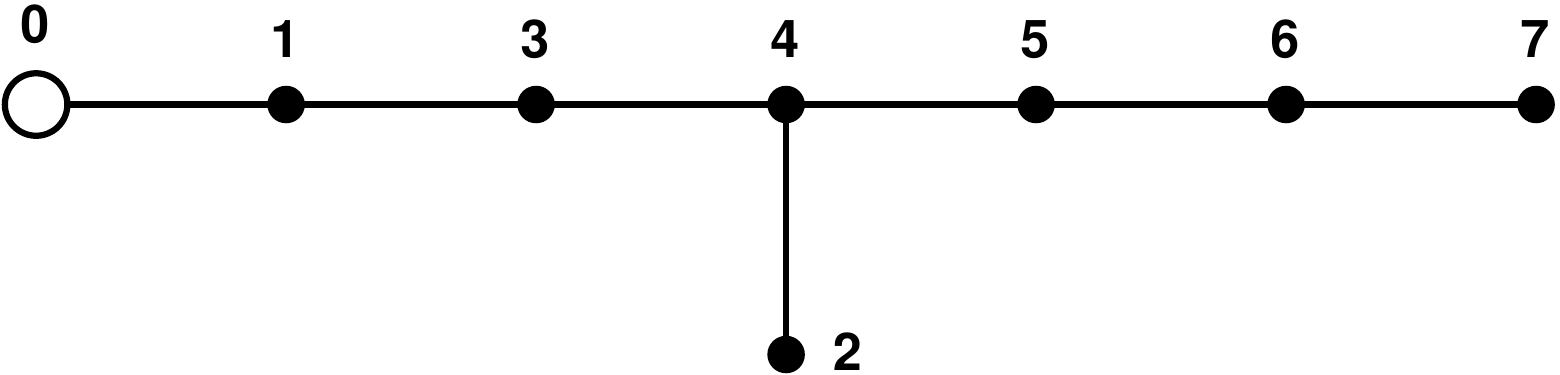}\label{fig:E7}} \hfill
\subfigure[$E_8^{(1)}$]{\includegraphics[scale=.3]{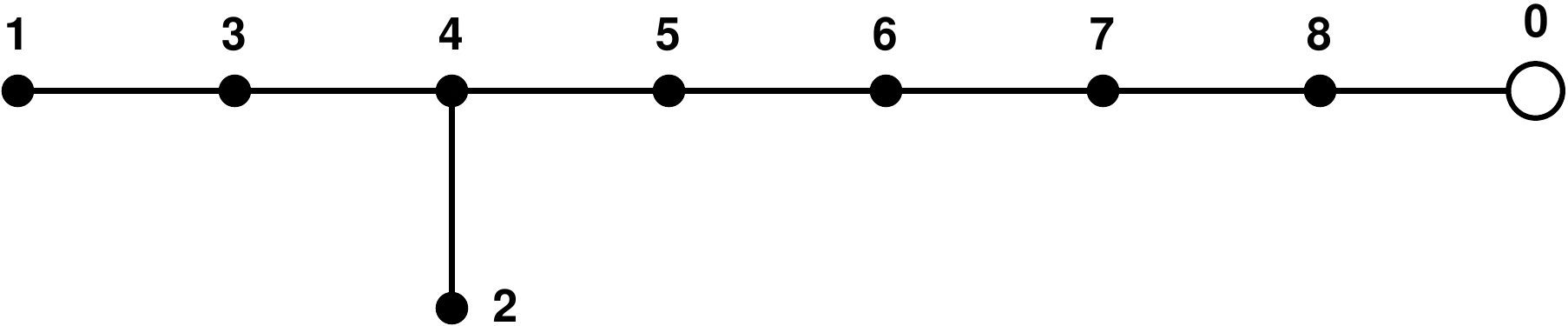}\label{fig:E8}}
\caption{The simply laced Dynkin diagrams of affine type. The notation comes from \cite{Kac90} and the corresponding
finite-type diagrams are obtained by deleting vertex $0$.}
\label{fig:Diagrams}
\end{figure}

We prove this theorem in Section \ref{sec:Moduli}. In Section
\ref{sec:Sandwich} we first derive some relations between the sandwich
algebra $\mL(0)$ and the positive part of the {\em complex} Kac-Moody
algebra of type $\Gamma$. Then we determine $\mL(0)$ explicitly in the
case where $\Gamma$ is a simply laced Dynkin diagram of finite or affine
type; by this we mean any of the diagrams in Figure \ref{fig:Diagrams}
without or with vertex $0$, respectively. See Theorems \ref{thm:Finite}
and \ref{thm:Affine}. In Section \ref{sec:Parameter} we study the variety
$X$. After some observations for general $\Gamma$, we again specialise to
the diagrams of Figure \ref{fig:Diagrams}. For these we prove that $X$
is an affine space, and that for $f$ in an open dense subset of $X$ the
Lie algebra $\mL(f)$ is isomorphic to a fixed Lie algebra; see Theorems
\ref{thm:GenericFinite} and \ref{thm:GenericAffine}. The latter of these
theorems is our second main result, and we paraphrase it here.

\begin{thm}
Let $\Gamma$ be any of the simply laced Dynkin diagrams of affine
type in Figure \ref{fig:Diagrams}, let $\Gamma^0$ be the finite-type
diagram obtained by removing vertex $0$ from $\Gamma$, and let $\Sigma$
be the edge set of $\Gamma$. Then $X$ is isomorphic to the affine space
of dimension $|\Sigma|+1$ over $K$, and for $f$ in an open dense subset
of $X$ the Lie algebra $\mL(f)$ is isomorphic to the Chevalley algebra
of type $\Gamma^0$.
\end{thm}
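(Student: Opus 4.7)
The plan is to leverage the explicit description of $\mL(0)$ in Theorem \ref{thm:Affine} together with a direct construction of Chevalley-algebra representatives in $X$. By that theorem, $\dim \mL(0)$ equals the dimension of the Chevalley algebra $\liea g$ of type $\Gamma^0$, so every $f \in X$ automatically has $\dim \mL(f) = \dim \liea g$; hence to prove $\mL(f) \cong \liea g$ for a given $f$ it suffices to exhibit a surjective Lie algebra homomorphism $\mL(f) \twoheadrightarrow \liea g$.

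To produce such a homomorphism, construct extremal generators directly in $\liea g$: for $i \in \Pi \setminus \{0\}$ let $e_i$ be a Chevalley generator for the simple root $\alpha_i$, and let $e_0$ be a root vector for $-\theta$, where $\theta$ is the highest root of $\liea g$. Each $e_i$ is a long-root vector, hence extremal, and the commutation relations prescribed by $\Gamma$ hold: for $i,j \neq 0$ non-adjacent in $\Gamma^0$ the sum $\alpha_i + \alpha_j$ is not a root, while for $j \neq 0$ non-adjacent to the affine node $0$ the difference $\alpha_j - \theta$ is not a root. Since the $e_i$ also generate $\liea g$, this yields a point $f^* \in X$ with $\mL(f^*) \cong \liea g$. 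Rescaling $e_i \mapsto \mu_i e_i$ for $\mu \in (K^*)^\Pi$ gives a whole algebraic family of such points, and the locus $U = \{f \in X : \mL(f) \cong \liea g\}$ is therefore non-empty and open by upper semicontinuity of fibre dimensions in the natural family of quotients of $\mF$.

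The bulk of the work then goes into identifying $X$ itself with the affine space $\mathbb A^{|\Sigma|+1}_K$. For this I would invoke Theorem \ref{thm:Variety} with a finite-dimensional graded $V \subseteq \mF$ obtained by lifting a root-space basis of $\liea g$ through the surjection of the previous paragraph, and analyse the extremal relations \eqref{eq:Extremal} for $y$ running through $V$ one graded piece at a time. Since $\mL(0)$ is already known, the Hilbert series of the coordinate ring of $X$ can be controlled; the natural expectation is that all but $|\Sigma|+1$ of the coordinates $f_x(y)$ are forced to be polynomial functions of the remaining ones. The $|\Sigma|+1$ free parameters should correspond to one scalar per edge of $\Gamma$, together with one global parameter, which for the four tree diagrams of Figure \ref{fig:Diagrams} plays the role of an overall normalization and for $\Gamma = A_n^{(1)}$ records the monodromy around the cycle.

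The main obstacle is carrying out this elimination uniformly across the five families of Figure \ref{fig:Diagrams}. A reasonable strategy is to treat $A_n^{(1)}$ separately, exploiting the cyclic symmetry, and to dispatch the tree cases by a common induction on removing a leaf vertex, using the finite-type Theorem \ref{thm:GenericFinite} (applied to the remaining diagram, viewed as a finite-type subdiagram) as the base of the induction. A useful additional tool is the $\NN$-grading of $\mF$ together with its finer root-lattice grading coming from the Kac-Moody algebra of type $\Gamma$ discussed in Section \ref{sec:Sandwich}, which confines each round of elimination to a small finite-dimensional piece. Once $X \cong \mathbb A^{|\Sigma|+1}_K$ is established, irreducibility of $X$ makes the open subset $U$ automatically dense, completing the proof.
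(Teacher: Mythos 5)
Your overall architecture is reasonable and the first two paragraphs match the paper's: since $\dim\liea{u}=\dim\liea{g}$, Theorem \ref{thm:Affine} reduces the isomorphism claim to exhibiting a surjection onto $\liea{g}$, and the standard Chevalley generators together with a lowest-root vector do give a point $f^*\in X$ with $\mL(f^*)\cong\liea{g}$. But there are two genuine gaps. The first and most serious is that you never actually prove $X\cong\mathbb{A}^{|\Sigma|+1}$. Your elimination step (that all coordinates $f_x(u)$ are polynomial in $|\Sigma|+1$ of them) is only stated as a ``natural expectation''; the paper does this via Lemma \ref{lm:Expressible}, which shows the only parameters not reducible to edge values are the $f_x(m_x)$ with $m_x$ of weight $\delta-\alpha_x$, and then an explicit computation with the extremal form collapsing all of these to the single value $f_{x_0}(m_{x_0})$. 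More importantly, even granting the resulting closed embedding $X\hookrightarrow K^\Sigma\times K$, you must show it is \emph{onto}, and your only supply of points is the $T$-orbit of $f^*$ under rescaling. By Remark \ref{re:Rank} that orbit fails to be dense in $K^\Sigma\times K$ for $A^{(1)}_{\mathrm{even}}$, $D^{(1)}_{\mathrm{odd}}$, $E_6^{(1)}$ and $A^{(1)}_{\mathrm{odd}}$: the characters $-\alpha_e,-\delta$ are linearly dependent there. This is exactly where the paper has to work: it moves $f^*$ transversally to its $T$-orbit by conjugating $G_{x_0}$ with the maximal torus of $\liea{g}$ (which changes $f_{x_0}(m_{x_0})$ while fixing the edge parameters), and for $A^{(1)}_{n-1}$ with $n$ even it must further enlarge the family of generating tuples to a variety $C'$ of rank-one trace-zero matrices, because tuples drawn from the root $\liea{sl}_2$'s satisfy the nontrivial closed relation \eqref{eq:SubVariety} and hence miss an entire hypersurface of $K^\Sigma\times K$. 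Nothing in your proposal detects or handles this obstruction.

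The second gap is the claim that $U=\{f\in X:\mL(f)\cong\liea{g}\}$ is open ``by upper semicontinuity of fibre dimensions.'' All fibres in the family over $X$ have the same dimension by definition of $X$, so semicontinuity of dimension gives nothing; the locus where the fibre lies in a fixed $\mathrm{GL}$-orbit of structure tensors is in general only locally closed, not open. The way the paper gets density is different: the parameter map from the irreducible variety of generating tuples to $X=K^\Sigma\times K$ is shown to be dominant (full-rank differential at one point), so by Chevalley its image contains a dense open subset, and every $f$ in that image has $\mL(f)$ surjecting onto, hence isomorphic to, $\liea{g}$. You should replace your openness claim by this dominance argument; as written, the step does not go through.
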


\begin{re} 
By the {\em Chevalley algebra of type $\Gamma^0$} we mean the Lie
algebra obtained by tensoring a certain $\ZZ$-form of the complex
simple Lie algebra of type $\Gamma^0$ with the field $K$; see Subsection
\ref{ssec:FiniteSandwich} for details. This Lie algebra is often simple,
but not always; see \cite[Chapter 4]{Strade04} and \cite{Seligman67}.
\end{re}

We conclude the paper with remarks on applications and related work
in Section \ref{sec:Notes}.

\section{The variety structure of the parameter space}
\label{sec:Moduli}

Recall the notation from Section \ref{sec:Introduction}: $\Gamma$
is a connected finite graph without loops or multiple edges, $K$
is a field of characteristic unequal to $2$, and $X$ is the set of
all $f\in(\mF^*)^{\Pi}$ such that $\mL(f)$ has the maximal possible
dimension, namely that of $\mL(0)$.  To avoid formulas with many
Lie brackets, we write $x_d \cdots x_1$ for the expression
$[x_d,[\ldots[x_2,x_1]\ldots]]$. Such an element is called a {\em
monomial} in the $x_i$ of {\em degree} $d$.  In the proof of Theorem
\ref{thm:Variety} we use the $\NN$-grading $\mF=\bigoplus_{d=1}^\infty
\mF_d$ of $\mF$ where $\mF_d$ is the span of all monomials of degree $d$
in the elements of $\Pi$. We also use the following terminology:
A subspace $V$ of $\mF$ is called {\em homogeneous} if it equals
$\bigoplus_d (V \cap \mF_d)$. A single element of $\mF$ is {\em
homogeneous} if it lies in some $\mF_d$. To any subspace $V$ of $\mF$
we associate the homogeneous subspace $\gr V$ of $\mF$ spanned by all
$v_d$ as $v=v_1+\ldots+v_d,\ v_i \in \mF_i,$ runs through $V$. If $V$
is an ideal then so is $\gr V$.

\begin{proof}[Proof of Theorem \ref{thm:Variety}]
Let $V$ be a finite-dimensional homogeneous subspace of $\mF$ such
that $\mF=V \oplus \mI(0)$; such a subspace exists as $\mL(0)$
is finite-dimensional \cite{Cohen01,Zelmanov90} and $\mI(0)$ is
homogeneous. Note that the theorem only requires that $\mF=V+\mI(0)$;
we will argue later why this suffices.
Observe that $V$ contains the image of $\Pi$ in $\mL$: the
abelian Lie algebra spanned by $\Pi$ is clearly a quotient of $\mL(0)$,
so the component of $\mI(0)$ in degree $1$ is trivial.  From the shape of
the generators \eqref{eq:Extremal} it is clear that the homogeneous ideal
$\gr \mI(f)$ associated to $\mI(f)$ contains $\mI(0)$, so that $\mF=V +
\mI(f)$ for {\em all} $f$, and $\mF=V \oplus \mI(f)$ if and only if $f
\in X$. We will argue that the map
\[ \Psi: X \rightarrow (V^*)^\Pi,\ f \mapsto (f_x|_V)_{x\in\Pi}=:f|_V \]
is injective, and that its image is a closed subvariety of
$(V^*)^\Pi$.

For each $f \in X$ let $\pi_f: \mF \rightarrow V$ be the projection onto $V$
along $\mI(f) $. We prove two slightly technical statements: First, for all
$u \in \mF$ there exists a polynomial map $P_u:(V^*)^\Pi \rightarrow V$
such that
\[ P_u(f|_V)=\pi_f(u) \text{ for all } f \in X; \]
and second, for $x \in \Pi$ and $u \in \mF$ there exists a polynomial
$Q_{x,u}:(V^*)^\Pi \rightarrow K$ such that
\[ Q_{x,u}(f|_V)=f_x(u) \text{ for all } f \in X \text{ and } Q_{x,u}(h)=h_x(u)
\text{ if } u \in V \text{ and } h \in (V^*)^\Pi. \]
We proceed by induction on the degree of $u$: assume that both statements
are true in all degrees less than $d$, and write $u=u_1+u_2+u_3$ where
$u_1$ has degree less than $d$, $u_2 \in V \cap \mF_d$, and $u_3 \in
\mI(0) \cap \mF_d$. Then $u_3$ can be written as a sum of terms of
the form $x_k \cdots x_1 x_1 u'$ with $x_i \in \Pi$ and $u'$ of degree
$d-(k+1)<d$. Modulo $\mI(f)$ for $f \in X$ this term is equal to
\[ f_{x_1}(u') \pi_f(x_k\cdots x_1)=Q_{x_1,u'}(f|_V) P_{x_k \cdots x_1}(f|_V), \]
where we used the induction hypothesis for $u'$ and $x_k\cdots x_1$.
Hence a $P_u$ of the form
\[ P_u:=P_{u_1}+u_2+\text{ terms of the form } Q_{x_1,u'} P_{x_k \cdots x_1} \]
has the required property. Similarly, for $x \in \Pi$ and $f \in X$
we have
\[ f_x(x_k\cdots x_1 x_1 u')x=xxx_k\cdots x_1 x_1 u'=
	Q_{x_1,u'}(f|_V) Q_{x,x_k\cdots x_1}(f|_V) x 
\mod \mI(f),\] 
and since $x \not \in \mI(f)$ we conclude that 
\[ f_x(x_k\cdots x_1 x_1 u')=Q_{x_1,u'}(f|_V) Q_{x,x_k\cdots x_1}(f|_V). \]
Hence we may define $Q_{x,u}$ by
\[ Q_{x,u}(h):=Q_{x,u_1}(h)+h_x(u_2)+\text{ terms of the form }
Q_{x_1,u'}(h) Q_{x,x_k \cdots x_1}(h),\ h \in (V^*)^\Pi. \]
This shows the existence of $P_u$ and $Q_{x,u}$. The injectivity of $\Psi$
is now immediate: any $f \in X$ is determined by its restriction to $V$
by $f_x(u)=Q_{x,u}(f|_V)$.

We now show that $\im(\Psi)$ is closed. For any tuple $h \in (V^*)^\Pi$
one may try to define a Lie algebra structure on $V$ by setting
\[ [u,v]_h:=P_{[u,v]}(h), \quad u,v \in V.  \]
By construction, if $h=f|_V$ for some $f \in X$, then this turns $V$
into a Lie algebra isomorphic to $\mL(f)$. Moreover, in this case the
Lie bracket has the following two properties:
\begin{enumerate}
\item If $v \in V$ is expressed as a linear combination
$\sum_{x_1,\ldots,x_d\in\Pi}c_{(x_d,\ldots,x_1)} x_d\cdots x_1$ of
monomials in the elements of $\Pi$, where the Lie bracket is taken in
$\mF$, then the expression
$\sum_{x_1,\ldots,x_d\in\Pi}c_{(x_d,\ldots,x_1)}[x_d,[\ldots [x_2,x_1]_h\ldots]_h]_h$ also equals $v$; and
\label{it:gen}
\item $[x,[x,u]_h]_h=Q_{x,u}(h) x$ for all $x\in \Pi,u \in
V$. \label{it:ext}
\end{enumerate}
Conversely, suppose that $[.,.]_h$ indeed defines a Lie algebra on $V$
satisfying \eqref{it:gen} and \eqref{it:ext}. Then $(V,[.,.]_h)$ is a Lie
algebra of dimension $\dim \mL(0)$ that by \eqref{it:gen} is generated
by the image of $\Pi$, and by \eqref{it:ext} this image consists of
extremal elements. Hence there exists an $f \in X$ corresponding to this
Lie algebra, and its restriction to $V$ is $h$---indeed, $f_x(u)$ is
the coefficient of $x$ in $[x[x,u]_h]_h$, which is $Q_{x,u}(h)=h_x(u)$
for $u \in V$. Finally, all stated conditions on $h$---the fact that
$[.,.]_h$ satisfies the Jacobi identity and anti-commutativity, together
with \eqref{it:gen} and \eqref{it:ext}---are closed; here we use the
polynomiality of $P_u$ and $Q_{x,u}$. This proves that $\im(\Psi)$
is closed.

Now if $U$ is any homogeneous subspace containing $V$, then the restriction
map $\Psi':X \rightarrow (U^*)^\Pi$ is clearly also injective. Moreover,
an $h' \in (U^*)^\Pi$ lies in the image of this map if and only if $h'|_V$
lies in $\im \Psi$ and $h'_x(u)=Q_{x,u}(h'|_V)$ for all $u \in U$. Thus
$\im \Psi'$ is closed and the maps $\im \Psi' \rightarrow \im \Psi,\
h' \mapsto h'|_V$ and $\im \Psi \rightarrow \im \Psi',\ h \mapsto h'$
with $h'_x(u)=Q_{x,u}(h),\ u \in U$ are inverse morphisms between $\im
\Psi$ and $\im \Psi'$. Similarly, if $V'$ is any other homogeneous
vector space complement of $\mI(0)$ contained in $U$, then the restriction map 
$(U^*)^\Pi \rightarrow ((V')^*)^\Pi$ induces an isomorphism between the
images of $X$ in these spaces. This shows that the variety structure of
$X$ does not depend on the choice of $V$. Finally, all morphisms indicated
here are defined over $K$. We conclude that we have a $K$-variety structure on $X$
which is independent of the choice of $V$.
\end{proof}

The type of reasoning in this proof will return in Section
\ref{sec:Parameter}: in the case where $\Gamma$ is a Dynkin diagram we
will show that for $f \in X$ the restriction $f|_V$ actually depends
polynomially on even fewer values of the $f_x$, thus embedding $X$ into
smaller affine spaces. That these embeddings are closed can be proved
exactly as we did above.

\section{The sandwich algebra} \label{sec:Sandwich}

For now, $\Gamma$ is an arbitrary finite graph (not necessarily a Dynkin
diagram). The Lie algebra $\mathcal{L}(0)$ is the so-called {\em sandwich
algebra} corresponding to $\Gamma$. It is a finite-dimensional nilpotent
Lie algebra, and carries an $\NN^\Pi$-grading defined as follows. The
{\em weight} of a word $(x_d,\ldots,x_1)$ over $\Pi$ is the element $\mu
\in \NN^\Pi$ whose $x$-coordinate equals
\[ | \{i \in \{1,\ldots,d\} \mid x_i=x\}|, \]
for all $x \in \Pi$. For such a word the corresponding monomial $x_d\ldots
x_1$ lives in the free Lie algebra on $\Pi$, but we use the same notation
for its images in $\mathcal{F}$ and $\mathcal{L}(f)$ when this does
not lead to any confusion.  We will sometimes say that a monomial $x_d
\cdots x_1 \in \mathcal{L}(0)$ has weight $\mu$; then we mean that the
word $(x_d,\ldots,x_1)$ has weight $\mu$---the monomial $x_d \cdots x_1$
itself might be $0$.  Now the free Lie algebra is graded by weight,
and this grading refines the grading by degree.  Like the grading
by degree, the grading by weight is inherited by $\mathcal{L}(0)$
as all relations defining $\mathcal{L}(0)$ are monomials. We write
$\mathcal{L}(0)_{\mu}$ for the space of weight $\mu\in\NN^{\Pi}$ and
call $\dim \mathcal{L}(0)_{\mu}$ the {\em multiplicity} of $\mu$.

For $x \in \Pi$ let $\alpha_x$ be the element with a $1$ on position $x$
and zeroes elsewhere; that is, $\alpha_x$ is the weight of the word $(x)$.
We define a symmetric $\ZZ$-bilinear form $\la.,.\ra$ on
$\ZZ^\Pi$
by its values on the standard basis: for $x,y \in \Pi$ we set
\[ \la \alpha_x, \alpha_y \ra:=\begin{cases}
    2& \text{ if } x=y\\
    -1&\text{ if } x \sim y, \text{ and}\\
    0& \text{ otherwise.}
    \end{cases}
\]
The matrix $A:=(\la \alpha_x, \alpha_y \ra )_{x,y \in \Pi}$ is called
the {\em Cartan matrix} of $\Gamma$. The {\em height} of an element of
$\ZZ^\Pi$ is by definition the sum of the coefficients of the $\alpha_x,
x \in \Pi$, in it.

In what follows we often need to show that certain  monomials $x_d
\cdots x_1$ are zero in $\mathcal{L}(0)$. Lemmas \ref{lm:AtLeastTwo}--\ref{lm:VeryReal}
show how the bilinear form comes into play. But first we recall an
elementary property of sandwich elements, to which they owe their name.

\begin{lm} \label{lm:Sandwich}
Let $x$ be a sandwich element in a Lie algebra $\mathcal{L}$ and let $y,z
\in \mathcal{L}$ be arbitrary. Then $xyxz=0$.
\end{lm}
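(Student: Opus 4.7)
The plan is to reduce the triple bracket $xyxz = [x,[y,[x,z]]]$ to two expressions of the form $[x,[x,w]]$, each of which vanishes because $x$ is a sandwich element. The only tool needed is the Jacobi identity applied once in the right place.

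Concretely, I would first rewrite the inner bracket $[y,[x,z]]$ using Jacobi
\[
[y,[x,z]] + [x,[z,y]] + [z,[y,x]] = 0,
\]
which gives $[y,[x,z]] = [x,[y,z]] + [z,[x,y]]$. Substituting this into $[x,[y,[x,z]]]$ and using linearity of the outer bracket yields
\[
[x,[y,[x,z]]] = [x,[x,[y,z]]] + [x,[z,[x,y]]].
\]
Now both summands are of the form $[x,[x,w]]$ with $w \in \mathcal{L}$, so each vanishes by the sandwich condition $[x,[x,\mathcal{L}]]=0$. Hence $xyxz = 0$.

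There is essentially no obstacle here; the only ``choice'' is to apply Jacobi to the inner bracket $[y,[x,z]]$ rather than to the outer one, so that both $x$'s end up adjacent after the rewrite. Applying Jacobi to the outer bracket instead would produce $[[x,y],[x,z]] + [y,[x,[x,z]]]$, where the second term is zero but the first is not manifestly so, and one would have to work further; the chosen expansion avoids this.
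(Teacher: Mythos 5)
There is a genuine gap. After your Jacobi rewrite you obtain
\[
[x,[y,[x,z]]] = [x,[x,[y,z]]] + [x,[z,[x,y]]],
\]
and the first summand does vanish by the sandwich condition. But the second summand $[x,[z,[x,y]]]$ is \emph{not} of the form $[x,[x,w]]$: the two occurrences of $x$ are separated by $z$. In the paper's notation it is $xzxy$, which is exactly the statement of the lemma with $y$ and $z$ interchanged. So your argument only establishes the (true but insufficient) identity $xyxz = xzxy$, and asserting that the second term vanishes ``by the sandwich condition'' is circular.

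A structural reason the argument cannot be repaired without a new ingredient: the lemma genuinely requires $\cha K \neq 2$ (the paper remarks that in characteristic $2$ this identity must be \emph{imposed} as part of the definition of a sandwich element), so any proof using only the Jacobi identity and $[x,[x,\cdot]]=0$, valid over every field, is necessarily incomplete. The paper's proof proceeds differently: it first applies Jacobi to the \emph{outer} bracket (the route you set aside), getting $xyxz = [[x,y],[x,z]]$ after killing $yxxz$, then applies Jacobi twice more to the remaining term to reach $xyxz = xxyz - xyxz = -xyxz$, and finally divides by $2$. You should either follow that chain or otherwise produce the relation $2\,xyxz=0$ explicitly; merely pairing the two $x$'s once will not suffice.
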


\begin{proof}
We have
\begin{align*}
xyxz&=[x,y]xz+yxxz=[x,y]xz=-[x,xy]z+x[x,y]z=zxxy+x[x,y]z\\
&=x[x,y]z=xxyz-xyxz=-xyxz.\qedhere
\end{align*}
\end{proof}

\begin{re}
Note that we have used here that the characteristic is not $2$. In the
case of characteristic $2$, this lemma should be taken as part of the
definition of a sandwich element \cite{Cohen06}.
\end{re}

\begin{lm} \label{lm:AtLeastTwo}
Let $w=(x_d,x_{d-1},\ldots,x_1)$ be a word over $\Pi$ and let $x \in
\Pi$. Let $x_i$, $x_j$ be consecutive occurrences of $x$ in $w$ (i.e.,
$i>j$, $x_i=x_j=x$, and $x_k \neq x$ for all $k$ strictly between $i$ and
$j$). Suppose that the letters in $w$ strictly between $x_i$ and $x_j$
contain at most $1$ occurrence of a $\Gamma$-neighbour of $x$, i.e.,
the set $\{k \in \{j+1,\ldots,i-1\} \mid x_k \sim x\}$ has cardinality
at most $1$. Then $x_d x_{d-1}\cdots x_1$ is $0$ in $\mathcal{L}(0)$.
\end{lm}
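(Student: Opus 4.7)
The plan is to exploit that in $\mL(0)$ every $x \in \Pi$ is a sandwich element, so $\ad(x)^2 = 0$, and more generally $\ad(x)\ad(y)\ad(x) = 0$ for any $y$ by Lemma~\ref{lm:Sandwich}. Writing the monomial as an operator applied to $x_1$,
\[
x_d\cdots x_1 = \ad(x_d)\cdots\ad(x_2)(x_1),
\]
it suffices to show that the operator
\[
T := \ad(x)\,\ad(x_{i-1})\cdots\ad(x_{j+1})\,\ad(x)
\]
coming from the two marked $x$'s in positions $i$ and $j$ vanishes.

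The key observation is that a non-neighbour $y \not\sim x$ of $x$ (with $y \neq x$, which is automatic by the ``consecutive occurrences'' hypothesis) satisfies $[x,y] = 0$ in $\mF$, hence $\ad(x)$ and $\ad(y)$ commute as operators on $\mF$ by the Jacobi identity. Therefore, in the product forming $T$, every $\ad(x_k)$ with $j < k < i$ and $x_k \not\sim x$ can be freely pushed past either copy of $\ad(x)$.

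I would then split into two cases according to the hypothesis. If no $x_k$ with $j<k<i$ is a neighbour of $x$, I would slide the right $\ad(x)$ all the way to the left, obtaining $T = \ad(x)^2 \,\ad(x_{i-1})\cdots\ad(x_{j+1})$, which vanishes because $x$ is a sandwich element. If exactly one $x_m$ with $j<m<i$ is a neighbour of $x$, I would slide the right $\ad(x)$ leftward past $\ad(x_{j+1}),\ldots,\ad(x_{m-1})$ and the left $\ad(x)$ rightward past $\ad(x_{i-1}),\ldots,\ad(x_{m+1})$, producing
\[
T = \ad(x_{i-1})\cdots\ad(x_{m+1})\,\bigl(\ad(x)\,\ad(x_m)\,\ad(x)\bigr)\,\ad(x_{m-1})\cdots\ad(x_{j+1}).
\]
The inner operator is zero as an operator on $\mL(0)$ by Lemma~\ref{lm:Sandwich} applied with $y = x_m$, so $T = 0$ in either case, and hence $x_d \cdots x_1 = 0$ in $\mL(0)$.

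I do not expect any serious obstacle; the only thing to be careful about is that the defining relations $[x,y]=0$ for non-neighbours hold in $\mF$ (hence in $\mL(0)$) so that the commutations of the $\ad$'s are legitimate, and that Lemma~\ref{lm:Sandwich} really gives the stronger operator identity $\ad(x)\ad(y)\ad(x)=0$ rather than just $\ad(x)^2=0$, which is what makes case~2 work.
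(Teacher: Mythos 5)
Your plan is essentially the paper's own proof recast in operator language: both arguments use that $\ad(x)$ commutes with $\ad(x_k)$ for the non-neighbours $x_k\neq x$ sitting between the two marked occurrences, push the two copies of $\ad(x)$ together around the (at most one) neighbour, and then kill the result with $\ad(x)^2=0$ or with the operator identity $\ad(x)\ad(y)\ad(x)=0$ from Lemma~\ref{lm:Sandwich}. All of these steps are legitimate for exactly the reasons you give. The one thing you have glossed over is the boundary case $j=1$: there $x_j=x_1$ is the \emph{argument} of the operator product $\ad(x_d)\cdots\ad(x_2)$, not a factor $\ad(x_j)$, so the reduction to the vanishing of $T=\ad(x)\ad(x_{i-1})\cdots\ad(x_{j+1})\ad(x)$ does not literally apply and your plan silently assumes $j\geq 2$. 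The case is easy but needs its own (slightly different) words, as in the paper: if $i=2$ the innermost bracket is $[x,x]=0$; if $i>2$ and $x_2$ is neither $x$ nor a neighbour of $x$ then already $[x_2,x_1]=0$ in $\mF$; and in the remaining situation the unique neighbour is $x_2$, so sliding $\ad(x_i)$ down to position $3$ gives $\ad(x)\ad(x_2)(x)=[x,[x_2,x]]=-\ad(x)^2(x_2)=0$ by the sandwich property. With that case added, the proof is complete.
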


\begin{proof}
Set $z:=x_d x_{d-1}\cdots x_1$. First, using the fact that on $\mathcal{F}$
the linear map $\ad(x)$ commutes with $\ad(y)$ for any $y \in \Pi$ with $x
\not\sim y$, we can move $x_i$ in $z$ to the right until it is directly
to the left of either $x_j$ or the unique $x_k \sim x$ between $x_i$
and $x_j$, so we may assume that this was already the case to begin with.

If $i=j+1$ then either $j=1$ and $z$ is zero by anti-commutativity, or
$j>1$ and the monomial $x_i x_j x_{j-1} \cdots x_1=x x x_{j-1} \cdots x_1$
is zero by the sandwich property of $x$.

Suppose, on the other hand, that $x_i \sim x_{i-1}$. Then if $j=1$ and
$i-1>2$ the monomial $z$ is zero since $x_2 x_1$ is---indeed, $x_2 \not
\sim x_1$. On the other hand, if $i-1=2$ or $j>1$ then we can move $x_j$
in $z$ to the left until it is directly to the right of $x_{i-1}$. So
again, we may assume that it was there right from the beginning. But now
\[ x_i x_{i-1} x_j x_{j-1} \cdots x_1 = x x_{i-1} x x_{j-1} \cdots
x_1. \]
If $j>1$, then this monomial equals zero by Lemma \ref{lm:Sandwich};
and if $j=1$, then it is zero by the sandwich property of $x$.
\end{proof}

\begin{lm} \label{lm:Reduce}
Let $(x_d,x_{d-1},\ldots,x_1)$ be a word with $d \geq 2$ over $\Pi$
and suppose that the weight $\mu$ of $(x_{d-1},x_{d-2},\ldots,x_1)$ satisfies
$\la \alpha_{x_d}, \mu \ra \geq 0$. Then $x_d x_{d-1}\cdots x_1=0$ in $\mathcal{L}(0)$.
\end{lm}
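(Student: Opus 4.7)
Write $x=x_d$ and $m=\mu_x$; the hypothesis $\la \alpha_x,\mu\ra \geq 0$ then reads $2m \geq T$, where $T:=\sum_{y \sim x}\mu_y$ is the number of $\Gamma$-neighbours of $x$ in the subword $(x_{d-1},\ldots,x_1)$. The $m+1$ occurrences of $x$ in the full word $(x_d,\ldots,x_1)$ partition the remaining positions into $m$ \emph{inter-$x$ gaps}, each of the form to which Lemma~\ref{lm:AtLeastTwo} is designed to apply, together with one \emph{tail} consisting of the positions to the right of the rightmost copy of $x$. My plan is to combine a pigeonhole argument on the gaps with a direct commutation argument on the tail, and then to handle a single tight boundary case by exploiting antisymmetry of the innermost bracket.

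If $m=0$ the hypothesis forces $T=0$, so $x$ commutes in $\mathcal{F}$ with every letter of the subword and repeated Jacobi yields $x_d x_{d-1}\cdots x_1=0$. For $m\geq 1$ I first dispatch two easy cases. If $T<2m$ or the tail contains at least one $\Gamma$-neighbour of $x$, then the $m$ gaps collectively contain at most $2m-1$ neighbours, so pigeonhole produces a gap with at most one neighbour of $x$, and Lemma~\ref{lm:AtLeastTwo} immediately gives $x_d\cdots x_1=0$. If instead the tail contains no neighbour of $x$ but the rightmost $x$ lies at a position $p>1$, then $x$ commutes in $\mathcal{F}$ with every tail letter, so repeated Jacobi pushes $\ad(x)$ through them to meet $x_1$, producing $[x,x_1]=0$ in the inner sub-bracket $x_p x_{p-1}\cdots x_1$; this zero then propagates outward through the nested brackets.

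The only remaining---and, I expect, hardest---case is $p=1$ combined with the equality $T=2m$, which forces every gap to contain exactly two neighbours of $x$. In particular $x_2\neq x$, since otherwise the last gap would be too short to hold two letters. Here I apply antisymmetry at the innermost bracket, $[x_2,x_1]=-[x,x_2]$, which up to an overall sign replaces the original word by $(x_d,\ldots,x_3,x,x_2)$. If $x_2\not\sim x$, the new rightmost $x$ sits at position $2$ and the new tail is just the single non-neighbour $x_2$, so the commutation argument of the previous paragraph applies verbatim to the new word and kills $[x,x_2]=0$. If $x_2\sim x$, then exactly one of the two neighbours from the old last gap has been pulled out into the new tail, so the new last gap (between the new $x$ at position $2$ and the next $x$ above it) contains exactly one neighbour of $x$, and Lemma~\ref{lm:AtLeastTwo} now applies directly to the new word. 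In either subcase the new monomial vanishes, and with it $x_d x_{d-1}\cdots x_1$.
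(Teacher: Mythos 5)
Your proof is correct and follows essentially the same route as the paper's: bound the neighbours of $x_d$ distributed over the gaps between consecutive occurrences of $x_d$, apply Lemma \ref{lm:AtLeastTwo} to a gap with at most one neighbour, dispose of a nonempty neighbour-free tail by commuting $x_d$ inward, and resolve the tight case $x_1=x_d$ by the antisymmetry swap $[x_2,x_1]=-[x_1,x_2]$ followed by one more application of Lemma \ref{lm:AtLeastTwo}. One small imprecision: in the final case ($p=1$, $T=2m$) the equality does not by itself \emph{force} every gap to contain exactly two neighbours --- the distribution over the $m$ gaps could be uneven --- so you should add that if some gap contains at most one neighbour you are done at once by Lemma \ref{lm:AtLeastTwo}, and only in the remaining situation does each gap (in particular the last one) contain exactly two; this is a one-line repair using a tool already present in your argument.
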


\begin{proof}
Set $w:=(x_d,x_{d-1},\ldots,x_1)$ and $z:=x_dx_{d-1} \cdots x_1$. The condition
on the bilinear form can be written as follows:
\[ 2 |\{j \in \{1,\ldots,d-1\} \mid x_j=x_d\}|
    \geq |\{j \in \{1,\ldots,d-1\} \mid x_j \sim x_d\}|
\]
First we note that if the right-hand side is $0$, then $z$ is trivially
zero: then all $x_i$ with $i<d$ commute with $x_d$, and there at least
$d-1\geq 1$ such factors. So we may assume that the right-hand side is
positive, and hence so is the left-hand side.

Let the set in the left-hand side of this inequality consist
of the indices $i_m>i_{m-1}>\ldots>i_1$; by the above $m$ is
positive. In the word $w$ there are $m$ pairs $(i,j)$
satisfying the conditions of Lemma \ref{lm:AtLeastTwo} with $x=x_d$, namely,
$(d,i_m),$ $(i_m,i_{m-1}),$ $\ldots,$ $(i_2,i_1)$. Now if for some such
$(i,j)$ there are less than two $\Gamma$-neighbours of $x_d$ in
the interval between $x_i$ and $x_j$, then $z=0$ by Lemma
\ref{lm:AtLeastTwo}. So we may assume that each of these $m$ intervals
contains at least two
$\Gamma$-neighbours
of $x_d$. But then, by the above
inequality, these exhaust all $\Gamma$-neighbours of $x_d$ in $w$, so in
particular there are exactly $2$
$\Gamma$-neighbours
of $x_d$ between $x_{i_2}$
and $x_{i_1}$, and none to the right of $x_{i_1}$. Now if $i_1>1$, then
$z$ is zero because $x_{i_1}$ commutes with everything to the right of
it. Hence assume that $i_1=1$, and note that $i_2 \geq 4$.  If $x_2 \not
\sim x_1=x_{i_1}$, then again $z$ is trivially $0$, so assume that $x_2$
is a $\Gamma$-neighbour of $x_d=x_1$. Then we have
\[ x_{i_2} \cdots x_3 x_2 x_1 = - x_{i_2} \cdots x_3 x_1 x_2; \]
but in the monomial on the right there is only one $\Gamma$-neighbour
of $x_d$ between $x_{i_2}$ and $x_1$---hence it is zero by Lemma
\ref{lm:AtLeastTwo}.
\end{proof}

\begin{lm} \label{lm:MultOne}
Let $x \in \Pi$ and $\lambda \in \NN^\Pi$ satisfy $\la
\alpha_x, \lambda \ra=-1$. Then $\mathcal{L}(0)_{\alpha_x+\lambda}=[x,\mathcal{L}(0)_\lambda]$.
\end{lm}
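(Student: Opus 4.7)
The inclusion $[x,\mathcal{L}(0)_\lambda]\subseteq\mathcal{L}(0)_{\alpha_x+\lambda}$ is immediate from the $\NN^\Pi$-grading. For the reverse inclusion I plan to first establish the auxiliary vanishing claim that $\mathcal{L}(0)_{\alpha_x+\mu}=0$ whenever $\mu\in\NN^\Pi$ is nonzero with $\la\alpha_x,\mu\ra=0$, and then invoke it in a routine induction on degree.

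To prove the auxiliary claim, I take an arbitrary monomial $w=y_d\cdots y_1$ of weight $\alpha_x+\mu$ (so $d=1+|\mu|\geq 2$) and show $w=0$ by iteratively pushing the outermost $x$ to the left via Jacobi. Since $\alpha_x$ contributes $1$ to the $x$-coordinate, some $y_i$ equals $x$; let $j$ be the largest such index. If $j=d$, then $w=[x,y_{d-1}\cdots y_1]$ has inner weight $\mu$ and $\la\alpha_x,\mu\ra=0\geq 0$, so Lemma~\ref{lm:Reduce} gives $w=0$. If $j<d$ and $y_{j+1}\not\sim x$ then $[y_{j+1},x]=0$, and Jacobi produces a clean swap $[y_{j+1},[x,B]]=[x,[y_{j+1},B]]$ with $B=y_{j-1}\cdots y_1$, rewriting $w$ as an equivalent monomial whose outer $x$ has moved to position $j+1$; iterate. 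If $j<d$ and $y_{j+1}\sim x$, I claim $w=0$: setting $s=|\{i>j\mid y_i\sim x\}|\geq 1$ and using that $y_i\neq x$ for $i>j$, the weight $\beta$ of $B$ satisfies $\la\alpha_x,\beta\ra=\la\alpha_x,\mu\ra-\la\alpha_x,\sum_{i>j}\alpha_{y_i}\ra=0-(-s)=s\geq 0$, so Lemma~\ref{lm:Reduce} forces $[x,B]=0$, whence $[y_{j+1},[x,B]]=0$ and $w=\ad(y_d)\cdots\ad(y_{j+2})\bigl([y_{j+1},[x,B]]\bigr)=0$. Iteration terminates in one of these two vanishings.

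Armed with the auxiliary claim, the main statement follows by induction on $d=1+|\lambda|$. The base $d=2$ forces $\lambda=\alpha_y$ for some $y\sim x$ and gives $\mathcal{L}(0)_{\alpha_x+\alpha_y}=K[x,y]=[x,\mathcal{L}(0)_{\alpha_y}]$. For the inductive step, I split a monomial $m=y_d\cdots y_1$ of weight $\alpha_x+\lambda$ by its outermost letter $y_d$. If $y_d=x$ the membership is immediate. If $y_d\neq x$ and $y_d\not\sim x$, then the inner monomial has weight $\alpha_x+(\lambda-\alpha_{y_d})$ with $\la\alpha_x,\lambda-\alpha_{y_d}\ra=-1$, so by induction it equals some $\sum_i[x,v_i]$ and then $m=[y_d,\sum_i[x,v_i]]=\sum_i[x,[y_d,v_i]]$ using $[y_d,x]=0$. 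Finally, if $y_d=y\sim x$, the inner monomial has weight $\alpha_x+(\lambda-\alpha_y)$ with $\la\alpha_x,\lambda-\alpha_y\ra=0$, and $\lambda-\alpha_y\neq 0$ (otherwise we are in the base case), so the auxiliary claim forces the inner monomial to vanish and hence $m=0\in[x,\mathcal{L}(0)_\lambda]$.

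The main obstacle is the auxiliary claim. The crux is spotting that when the letter $y_{j+1}$ just outside the outermost $x$ is a neighbour of $x$, the weight of the inner piece $B$ automatically satisfies $\la\alpha_x,\beta\ra\geq 0$, which is exactly the hypothesis of Lemma~\ref{lm:Reduce} applied to $[x,B]$; once this ``dirty swap kills the monomial'' phenomenon is recognised, both the auxiliary claim and the main induction close without further difficulty.
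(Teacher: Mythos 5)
Your proof is correct, but it is organised differently from the paper's. The paper argues in a single pass on each monomial $z=x_d\cdots x_1$ of weight $\alpha_x+\lambda$: it locates the leftmost occurrence $x_k$ of $x$ (after one anticommutation if necessary to ensure $k\geq 2$) and observes that either some $\Gamma$-neighbour of $x$ occurs to the left of $x_k$, in which case the weight $\mu$ of $(x_{k-1},\ldots,x_1)$ satisfies $\la \alpha_x,\mu\ra=-1-\la\alpha_x,\nu\ra\geq 0$ and Lemma~\ref{lm:Reduce} kills $x_k\cdots x_1$ and hence $z$, or no neighbour of $x$ occurs to the left of $x_k$, in which case $\ad(x_k)$ commutes with all $\ad(x_i)$ for $i>k$ and $z$ equals $xz'$ outright. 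You instead run an outer induction on the degree, peeling the outermost letter $y_d$, and reduce the only nontrivial case ($y_d\sim x$) to the auxiliary vanishing statement $\mathcal{L}(0)_{\alpha_x+\mu}=0$ for $\mu\neq 0$ with $\la\alpha_x,\mu\ra=0$; your proof of that auxiliary statement is then essentially the paper's two-case analysis (kill via Lemma~\ref{lm:Reduce} when a neighbour sits to the left of the leftmost $x$, otherwise commute $x$ one step further left). Both routes rest on the same engine, and yours is somewhat longer; what it buys is the auxiliary claim itself, a weight-space vanishing which in the affine case is not a consequence of Lemma~\ref{lm:Roots} alone (e.g.\ $\alpha_x+\delta$ is a positive root with $\la\alpha_x,\delta\ra=0$) and which the paper only recovers ad hoc inside the proof of Theorem~\ref{thm:Affine}.

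One point you should make explicit: in the third case of your iteration you apply Lemma~\ref{lm:Reduce} to $[x,B]$ (or to $[x,C]$ for the current inner part $C$), and that lemma requires the inner word to be nonempty. The inner part is empty only when $x$ originally sits in position $1$ and no swap has yet occurred; but then $x$ occurs exactly once in $w$, so $\mu_x=0$ and $\la\alpha_x,\mu\ra=-|\{i>1\mid y_i\sim x\}|\leq -1$ because $y_2\sim x$ in that case, contradicting $\la\alpha_x,\mu\ra=0$. So the degenerate case is vacuous, but as written your argument silently assumes it away.
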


\begin{proof}
Let $w=(x_d,\ldots,x_1)$ be a word over $\Pi$ of weight
$\alpha_x+\lambda$.  We show that in $\mathcal{L}(0)$ the monomial $z:=x_d \cdots x_1$
is a scalar multiple of some monomial of the form $x z'$, where $z'$
is a monomial of weight $\lambda$. Obviously, $x$ occurs in $w$; let $k$
be maximal with $x_k=x$. If $k=1$, then we may interchange $x_k=x_1$ and
$x_{k+1}=x_2$ in $z$ at the cost of a minus sign (note that $d \geq 2$),
so we may assume that $k \geq 2$.

Suppose first that there occur $\Gamma$-neighbours of $x=x_k$ to the
left of $x_k$ in $w$. We claim that then $z=0$. Indeed, let $\mu,\nu$
be the weights of $(x_{k-1},\ldots,x_1)$ and $(x_d,\ldots,x_{k+1})$,
respectively. Then we have
\[ \la \alpha_x,\mu \ra = \la \alpha_x,\lambda \ra
- \la \alpha_x, \nu \ra = -1 - \la \alpha_x,\nu \ra \geq 0, \]
where in the last inequality we use that there are occurrences
of neighbours of $x_k$, but none of $x_k$ itself, in the word
$(x_d,\ldots,x_{k+1})$. Now we find $x_k \cdots x_1=0$ by Lemma
\ref{lm:Reduce} (note that $k\geq 2$), hence $z=0$ as claimed.

So we can assume that there are no $\Gamma$-neighbours of $x_k$ to the
left of $x_k$ in $w$.  Then we may move $x_k$ in $z$ all the way to
the left, hence $z$ is indeed equal to $xz'$ for some monomial $z'$
of weight $\lambda$.
\end{proof}

\subsection{Relation with the root system of the Kac-Moody algebra}
\label{ssec:KacMoody}

Recall the definition of the Kac-Moody algebra $\gKM$ over $\CC$
corresponding to $\Gamma$: it is the Lie algebra generated by $3 \cdot
|\Pi|$ generators, denoted $E_x,H_x,F_x$ for $x \in \Pi$, modulo the
relations
\begin{align*}
&H_x H_y = 0,\quad E_x F_x=H_x,\\
&H_x E_y =\la \alpha_x,\alpha_y \ra E_y,\quad H_x F_y=-\la
\alpha_x,\alpha_y \ra F_y; \text{ and for } x \neq y:\\
&E_x F_y=0,\  \ad(E_x)^{1-\la \alpha_x,\alpha_y \ra}E_y=0,
\text{ and } 
\ad(F_x)^{1-\la \alpha_x,\alpha_y \ra}F_y=0.
\end{align*}
Endow $\gKM$ with the $\ZZ^\Pi$-grading in
which $E_x,H_x,F_x$ have weights $\alpha_x,0,-\alpha_x$, respectively.
Let $\Phi:=\{\beta \in \ZZ^\Pi \setminus\{0\} \mid (\gKM)_\beta \neq
0\}$ be the {\em root system} of $\gKM$; it is equal to the disjoint
union of its subsets $\Phi_\pm:=\Phi \cap (\pm \NN)^\Pi$ and contains
the {\em simple roots} $\alpha_x,\ x \in \Pi$; we refer to \cite{Kac90}
for the theory of Kac-Moody algebras.  In what follows we will compare
the multiplicities of weights in the $K$-algebra $\mL(0)$ and the
$\CC$-algebra $\gKM$.

\begin{lm} \label{lm:Roots}
For $\lambda \in \NN^\Pi \setminus \Phi_+$ we have $\mL(0)_\lambda=0$.
\end{lm}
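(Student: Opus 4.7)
The plan is to realise $\mL(0)$ as a weight-preserving quotient of the positive part $\liea{n}^+$ of $\gKM$, and then to appeal to the root-space decomposition of $\liea{n}^+$, which by definition is supported on $\Phi_+$.

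First I would verify that $\mL(0)$ satisfies the Serre relations for $\gKM$. Because the symmetric matrix $A$ has off-diagonal entries in $\{0,-1\}$, these relations, for $x \neq y$ in $\Pi$, read $[x,y]=0$ when $x \not\sim y$ and $[x,[x,y]]=0$ when $x \sim y$. The first kind is built into the definition of $\mF$ and hence of $\mL(0)$, while the second kind is the specialisation $u=y$ of the sandwich defining relation $[x,[x,u]]=0$ that cuts $\mL(0)$ out of $\mF$. Consequently, writing $\liea{n}_K^+$ for the Lie algebra over $K$ generated by symbols $\{E_x\}_{x\in\Pi}$ modulo the Serre relations above, the assignment $E_x \mapsto x$ extends to a surjective, $\NN^\Pi$-graded Lie algebra homomorphism $\varphi: \liea{n}_K^+ \twoheadrightarrow \mL(0)$; in particular, $\dim_K \mL(0)_\lambda \leq \dim_K (\liea{n}_K^+)_\lambda$ for every $\lambda \in \NN^\Pi$.

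Second I would invoke the standard structure theory of symmetric Kac--Moody algebras as developed in \cite{Kac90}: the graded dimensions of $\liea{n}_K^+$ coincide with those of the complex Lie algebra $\liea{n}^+ \subset \gKM$, both being given by the root multiplicities $\dim_\CC (\gKM)_\lambda$ of a Chevalley integral form. Since by definition $(\gKM)_\lambda = 0$ for $\lambda \in \NN^\Pi \setminus \Phi_+$, the surjectivity of $\varphi$ yields $\mL(0)_\lambda = 0$, as claimed.

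The main obstacle is the first half of the second step: justifying that the Lie algebra defined over $K$ by Serre generators and relations has ground-field-independent graded dimensions. This is a form of the Gabber--Kac theorem and rests on the existence of a Chevalley-type integral form for $\liea{n}^+$; I would cite it from \cite{Kac90} rather than reprove it here.
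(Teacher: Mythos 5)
Your first step is sound: $\mL(0)$ does satisfy the Serre relations, since $[x,y]=0$ for $x\not\sim y$ is built into $\mF$ and the sandwich relation $[x,[x,u]]=0$ for all $u\in\mF$ contains $\ad(x)^2y=0$, $y\in\Pi$, as a special case; so there is indeed a graded surjection $\liea{n}_K^+\twoheadrightarrow\mL(0)$ from the Lie algebra presented over $K$ by Serre generators and relations. The gap is entirely in your second step. The set $\Phi_+$ is defined via the \emph{complex} algebra $\gKM$, and the statement you need --- that $(\liea{n}_K^+)_\lambda=0$ for $\lambda\in\NN^\Pi\setminus\Phi_+$ --- is a positive-characteristic form of the Gabber--Kac theorem. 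The reference \cite{Kac90} proves this only over $\CC$; it says nothing about the quotient of the free Lie algebra over a field of characteristic $p$ by the ideal generated by the Serre relations. Concretely, if $\liea{n}_\ZZ^+$ denotes the $\ZZ$-form with the same presentation, then $\liea{n}_K^+\cong\liea{n}_\ZZ^+\otimes_\ZZ K$, and for $\lambda\notin\Phi_+$ the complex Gabber--Kac theorem only tells you that $(\liea{n}_\ZZ^+)_\lambda$ is a torsion group; nothing you cite excludes $p$-torsion, which would make $(\liea{n}_K^+)_\lambda\neq 0$. This is precisely the difficulty the authors flag in the remark following Theorem \ref{thm:Affine}: pursuing the Kac--Moody route in positive characteristic would require the results of \cite{Billig91}, which moreover concern affine types only, whereas Lemma \ref{lm:Roots} is stated and used for arbitrary graphs $\Gamma$.

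The paper's own proof avoids all of this by exploiting the full strength of the sandwich relations, which are strictly stronger than the Serre relations: $[x,[x,u]]=0$ holds for \emph{all} $u$, not just for generators, and this yields $xyxz=0$ (Lemma \ref{lm:Sandwich}). From these one gets the purely combinatorial Lemma \ref{lm:Reduce} ($x_d\cdots x_1=0$ in $\mL(0)$ whenever $\la\alpha_{x_d},\mu\ra\geq 0$ for $\mu$ the weight of $(x_{d-1},\ldots,x_1)$), and then an induction on height finishes the proof, the only input from $\gKM$ being the elementary $\liea{sl}_2$-fact that $\mu\in\Phi_+$ and $\mu+\alpha_{x_d}\notin\Phi_+$ force $\la\alpha_{x_d},\mu\ra\geq 0$. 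Note also that your bound $\dim\mL(0)_\lambda\leq\dim(\liea{n}_K^+)_\lambda$ is very far from tight --- $\mL(0)$ is finite-dimensional while $\liea{n}_K^+$ is not for affine $\Gamma$ --- and the extra sandwich relations are exactly what make a characteristic-free, elementary argument possible. To repair your proof you would need either a genuine characteristic-$p$ reference in the required generality, or an argument that actually uses $[x,[x,u]]=0$ for all $u$.
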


\begin{proof}
We proceed by induction on the height of $\lambda$. The proposition is
trivially true for $\lambda$ of height $1$. Suppose now that it is true
for height $d-1\geq 1$, and consider a word $w=(x_d,x_{d-1},\ldots,x_1)$
of weight $\lambda \not \in \Phi_+$.

Set $\mu:=\lambda-\alpha_{x_d}$.  If $\mu \not \in \Phi_+$, then
$x_{d-1}\cdots x_1=0$ by the induction hypothesis, so we may assume
that $\mu \in \Phi_+$. This together with $\mu+\alpha_{x_d} \not \in
\Phi_+$ implies (by elementary $\liea{sl}_2$-theory in $\gKM$) that $\la
\alpha_{x_d}, \mu \ra \geq 0$. Now Lemma \ref{lm:Reduce} shows that $x_d
\cdots x_1=0$.
\end{proof}

For another relation between weight multiplicities in $\mL(0)$ and $\gKM$,
recall that a root in $\Phi$ is called {\em real} if it is in the orbit
of some simple root under the Weyl group $W$ of $\gKM$. In that case it
has multiplicity $1$ in $\gKM$. We now call a root $\beta \in \Phi_+$
{\em very real}---this is non-standard terminology---if it can be written
as $\beta=\alpha_{x_d}+\ldots+\alpha_{x_1}$, for some $x_1,\ldots,x_d
\in \Pi$, such that for all $i=2,\ldots,d$ we have
\[ \la \alpha_{x_i}, \alpha_{x_{i-1}}+\ldots+\alpha_{x_1} \ra = -1. \]
(This implies that $\beta=s_{x_d} \cdots s_{x_2} \alpha_{x_1}$,
where the $s_{x}$ are the fundamental reflections corresponding
to the $x \in \Pi$.)

\begin{lm} \label{lm:VeryReal}
Any very real $\beta \in \Phi_+$ has multiplicity at most
$1$ in $\mL(0)$.
\end{lm}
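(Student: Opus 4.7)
The plan is a direct induction on $d$, using Lemma \ref{lm:MultOne} repeatedly along the sequence of partial sums witnessing that $\beta$ is very real. Fix a decomposition $\beta=\alpha_{x_d}+\ldots+\alpha_{x_1}$ satisfying
\[ \la \alpha_{x_i}, \alpha_{x_{i-1}}+\ldots+\alpha_{x_1} \ra = -1 \quad \text{for all } i=2,\ldots,d, \]
and set $\lambda_i:=\alpha_{x_i}+\ldots+\alpha_{x_1}$, so that $\lambda_1=\alpha_{x_1}$, $\lambda_d=\beta$, and $\lambda_i=\lambda_{i-1}+\alpha_{x_i}$ with $\la \alpha_{x_i},\lambda_{i-1}\ra=-1$.

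Next I would verify the base case $i=1$: the weight $\alpha_{x_1}$ of $\mL(0)$ is spanned by the image of $x_1$, since any word of degree $1$ and weight $\alpha_{x_1}$ is just $(x_1)$; thus $\dim \mL(0)_{\lambda_1}\leq 1$. For the induction step, the condition $\la \alpha_{x_i},\lambda_{i-1}\ra=-1$ is exactly the hypothesis of Lemma \ref{lm:MultOne} applied to $x=x_i$ and $\lambda=\lambda_{i-1}$, which yields
\[ \mL(0)_{\lambda_i}=[x_i,\mL(0)_{\lambda_{i-1}}]. \]
Hence $\dim\mL(0)_{\lambda_i}\leq \dim\mL(0)_{\lambda_{i-1}}$, and iterating from $i=2$ up to $i=d$ gives $\dim\mL(0)_\beta=\dim\mL(0)_{\lambda_d}\leq 1$, which is the desired conclusion.

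There is essentially no obstacle beyond checking that the telescoping partial sums really do satisfy the hypothesis of Lemma \ref{lm:MultOne} at each stage; this is immediate from the very real hypothesis. In particular, one does not need to inspect the intermediate $\lambda_i$ for membership in $\Phi_+$ or worry about whether the bracket $[x_i,\,\cdot\,]$ is injective, since Lemma \ref{lm:MultOne} already packages both surjectivity of the bracket onto the target weight space and the degree bookkeeping needed to carry out the induction.
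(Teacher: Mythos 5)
Your argument is correct and is exactly the paper's proof, which is stated in one line as ``induction on the height of $\beta$, using Lemma \ref{lm:MultOne} for the induction step''; you have simply written out the telescoping chain $\dim\mL(0)_{\lambda_d}\leq\cdots\leq\dim\mL(0)_{\lambda_1}\leq 1$ explicitly. No issues.
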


\begin{proof}
This follows by induction on the height of $\beta$, using Lemma
\ref{lm:MultOne} for the induction step.
\end{proof}

\subsection{Simply laced Dynkin diagram of finite type}
\label{ssec:FiniteSandwich}

In this section we assume that $\Gamma$ is a Dynkin diagram of
finite type, i.e., one of the diagrams in Figure \ref{fig:Diagrams}
with vertex $0$ removed. Then $\gKM$ is a finite-dimensional simple
Lie algebra over $\CC$. Now $\gKM$ has a {\em Chevalley basis}
\cite[Section 4.2]{Carter72}. This basis consists of the images of
the $H_x$ and one vector $E_\alpha \in (\gKM)_\alpha$ for every root
$\alpha \in \Phi$, where $E_{\alpha_x},E_{-\alpha_x}$ may be taken
as $E_x,F_x$, respectively. An important property that we will need
is that $[E_{\alpha},E_{\beta}] = \pm E_{\alpha+\beta}$ for all roots
$\alpha,\beta$ such that $\alpha+\beta$ is a root; here we use that the
$p$ in \cite[Theorem 4.2.1]{Carter72} is $0$ in the simply laced case.
The Chevalley basis spans a $\ZZ$-subalgebra of $\gKM$. Let $\liea{g}$
be the $K$-algebra obtained by tensoring this $\ZZ$-form with $K$, and
let $E_x^0,H_x^0,F_x^0$ be the images in $\liea{g}$ of $E_x,H_x,F_x$.
The Lie algebra $\liea{g}$ has a triangular decomposition
\[ \liea{g}=\liea{n}_- \oplus \liea{h} \oplus \liea{n}_+, \]
where $\liea{n}_{\pm}:=\bigoplus_{\beta \in \Phi_\pm} \liea{g}_\beta$.
We will refer to $\liea{g}$ as the {\em Chevalley algebra of type
$\Gamma$}.

\begin{re} 
One can also define $\liea{g}$ as the Lie algebra of the split simply
connected algebraic group over $K$ of type $\Gamma$.
\end{re}

\begin{thm} \label{thm:Finite}
Let $\Gamma$ be a simply laced Dynkin diagram of finite type, obtained
from a diagram in Figure \ref{fig:Diagrams} by removing vertex
$0$. Let $\liea{g}$ be the corresponding Chevalley algebra over the
field $K$ of characteristic unequal to $2$, and let $\liea{n}_+$ be
the subalgebra generated by the $E_x^0$. Then the map sending $x \in
\Pi$ to $E_x^0$ induces a (necessarily unique) isomorphism $\mL(0)
\rightarrow \liea{n}_+$.
\end{thm}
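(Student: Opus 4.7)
The plan is to construct the natural map $\phi\colon \mL(0) \to \liea{n}_+$ sending $x \mapsto E_x^0$, and then to show it is an isomorphism by comparing $\NN^\Pi$-graded pieces.

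First I would verify that $\phi$ is well-defined as a Lie algebra homomorphism from $\mF$ to $\liea{n}_+$ that factors through $\mL(0)$. For $x \not\sim y$ in $\Pi$ the Serre relation $\ad(E_x)^{1-\la\alpha_x,\alpha_y\ra}E_y=0$ with $\la\alpha_x,\alpha_y\ra=0$ immediately gives $[E_x^0,E_y^0]=0$. For the sandwich relation $[E_x^0,[E_x^0,z]]=0$ for $z\in\liea{n}_+$, it suffices to take $z$ homogeneous of weight $\mu\in\Phi_+$; then $[E_x^0,[E_x^0,z]]$ lives in weight $\mu+2\alpha_x$, and since $\Gamma$ is simply laced of finite type every $\alpha_x$-string of roots has at most two elements, so $\mu+2\alpha_x\notin\Phi$ and the double bracket vanishes in $\liea{g}$. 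The resulting $\phi\colon\mL(0)\to\liea{n}_+$ is surjective because its image contains the generators $E_x^0$ of $\liea{n}_+$.

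Next, $\phi$ is evidently $\NN^\Pi$-graded. By Lemma \ref{lm:Roots} one has $\mL(0)=\bigoplus_{\beta\in\Phi_+}\mL(0)_\beta$, while on the other side $\liea{n}_+=\bigoplus_{\beta\in\Phi_+}(\liea{n}_+)_\beta$ with each summand one-dimensional, since in simply laced finite type every root is real and therefore of multiplicity $1$. Hence the surjection $\phi$ is an isomorphism as soon as $\dim\mL(0)_\beta\leq 1$ for every $\beta\in\Phi_+$, and Lemma \ref{lm:VeryReal} reduces this to showing that every positive root is very real.

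The main step is precisely this combinatorial claim about $\Phi_+$. I would invoke the standard fact that any $\beta\in\Phi_+$ of height $d$ can be written as $\beta=\alpha_{x_d}+\cdots+\alpha_{x_1}$ with each partial sum $\alpha_{x_i}+\cdots+\alpha_{x_1}$ again in $\Phi_+$: if $\gamma\in\Phi_+$ has height at least two, then $\la\gamma,\gamma\ra>0$ forces $\la\gamma,\alpha\ra>0$ for some simple $\alpha$, whence $\gamma-\alpha\in\Phi_+$, and one iterates and reverses the order. In the simply laced situation each such step automatically satisfies the required pairing condition: for $\gamma:=\alpha_{x_{i-1}}+\cdots+\alpha_{x_1}$ and $\gamma+\alpha_{x_i}\in\Phi$, one has $\gamma\neq\pm\alpha_{x_i}$, so the $\alpha_{x_i}$-string through $\gamma$ (having at most two elements) must be $\{\gamma,\gamma+\alpha_{x_i}\}$, forcing $\la\alpha_{x_i},\gamma\ra=-1$.

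Putting it together, $\phi$ is a surjective weight-preserving homomorphism from $\mL(0)$ onto $\liea{n}_+$, with $\dim\mL(0)_\beta\leq 1=\dim(\liea{n}_+)_\beta$ for all $\beta\in\Phi_+$, so $\phi$ is a bijection on each weight space and therefore an isomorphism. The hardest part is the very-real claim for every positive root, but with Lemmas \ref{lm:Roots} and \ref{lm:VeryReal} on hand it collapses into classical facts about simply laced finite root systems; the remaining verifications (descent of $\phi$ through $\mI(0)$ and weight-preservation) are routine.
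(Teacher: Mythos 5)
Your proof is correct and follows essentially the same route as the paper: verify the defining relations of $\mL(0)$ in $\liea{n}_+$ to get a surjection, then bound $\dim\mL(0)_\beta$ by $1$ for each $\beta\in\Phi_+$ via Lemmas \ref{lm:Roots} and \ref{lm:VeryReal} together with the fact that all positive roots are very real. The only difference is cosmetic: you spell out the proof that every positive root is very real and phrase the sandwich verification via root strings, whereas the paper cites these as well-known facts and computes $\la\beta+2\alpha_x,\alpha_x\ra\geq 3$ directly.
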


In the proof of this theorem we use the following well-known facts about
simply laced Kac-Moody algebras of finite type: first, $\la .,. \ra$
only takes the values $-1,0,1,2$ on $\Phi_+ \times \Phi_+$, and second,
all roots in $\Phi_+$ are very real.

\begin{proof}[Proof of Theorem \ref{thm:Finite}.]
To prove the existence of a {\em homomorphism} $\pi$ sending $x$
to $E_x^0$, we verify that the relations defining $\mL(0)$ hold in
$\liea{n}_+$. That is, we have to prove that
\begin{align*}
&[E_x^0,E_y^0]=0\textrm{ for all }x,y\in\Pi \text{ with } x \not\sim y,\\
&\ad(E_x^0)^2 z=0\textrm{ for all }x\in\Pi\textrm{ and all }z\in\liea{n}_+.
\end{align*}
The first statement is immediate from the relations defining $\gKM$. For
the second relation, if $z \in \liea{n}_+$ is a root vector with root
$\beta \in \Phi_+$, then $\la \beta,\alpha_x \ra \geq -1$ by the above, so that
$\la \beta+2\alpha_x,\alpha_x \ra \geq 3$ and therefore $\beta+2\alpha_x \not
\in \Phi_+$, so that $\ad(E_x)^2 z=0$. As root vectors span $\liea{n}_+$,
we have proved the existence of $\pi$; uniqueness is obvious.

Now we have to show that $\pi$ is an isomorphism.  It is surjective
as $\liea{n}_+$ is generated by the $E_x^0$; this follows
from the properties of the Chevalley basis in Subsection
\ref{ssec:FiniteSandwich}. Hence it suffices to prove that $\dim
\mL(0) \leq \dim \liea{n}_+$. But by Lemma \ref{lm:Roots}, Lemma
\ref{lm:VeryReal} and the fact that all roots are very real we have
$\mL(0)_\mu=0$ for all $\mu \not \in \Phi_+$ and $\dim\mL(0)_\beta \leq
\dim \liea{g}_\beta$ for all $\beta \in \Phi_+$. This concludes the proof.
\end{proof}

\subsection{Simply laced Dynkin diagrams of affine type}
\label{ssec:AffineSandwich}

Suppose now that $\Gamma$ is a simply laced Dynkin diagram of affine
type from Figure \ref{fig:Diagrams}. Recall that the Cartan matrix $A$
has a one-dimensional kernel, spanned by a unique primitive vector
$\delta \in \NN^\Pi$. Here primitive means that the greatest common
divisor of the coefficients of $\delta$ on the standard basis is $1$;
indeed, there always exists a vertex $x_0 \in \Pi$ (labelled $0$ in
Figure \ref{fig:Diagrams}) with coefficient $1$ in $\delta$, and all
such vertices form an $\Aut(\Gamma)$-orbit. For later use, we let $h$
be the {\em Coxeter number}, which is the height of $\delta$.

Write $\Pi^0:=\Pi \setminus \{x_0\}$, $\Gamma^0$ for the induced
subgraph on $\Pi^0$ (which is a Dynkin diagram of finite type), and
$\Phi^0$ for the root system of the Chevalley algebra $\liea{g}$ of
type $\Gamma^0$ defined in Subsection \ref{ssec:FiniteSandwich}.
This root system lives in the space $\ZZ^{\Pi^0}$, which we
identify with the elements of $\ZZ^{\Pi}$ that are zero on
$x_0$. Retain the notation $\liea{n}_{\pm} \subseteq \liea{g}$
from Subsection \ref{ssec:FiniteSandwich}. Consider the semi-direct
product $\liea{u}:=\liea{n}_+ \ltimes \liea{g}/\liea{n}_+$, where the
second summand is endowed with the trivial Lie bracket and the natural
$\liea{n}_+$-module structure. This $\liea{u}$ is clearly a nilpotent Lie
algebra; we will prove that it is isomorphic to $\mL(0)$. In our proof
we use the following $\ZZ^\Pi$-grading of $\liea{u}$: the root spaces in
$\liea{n}_+$ have their usual weight in $\Phi^0_+ \subseteq \ZZ^{\Pi^0}$,
while the image of $\liea{g}_\beta,\ \beta \in \{0\} \cup \Phi^0_-,$ in
$\liea{g}/\liea{n}_+ \subseteq \liea{u}$ has weight $\delta+\beta$. Thus
the set of all weights ocurring in $\liea{u}$ is
\[ \Theta:=\Phi^0_+ \cup
    \{\delta + \beta \mid \beta \in \Phi^0_-\} \cup \{\delta\}. \]

\begin{thm} \label{thm:Affine}
Let $\Gamma$ be a simply laced Dynkin diagram of affine type from Figure
\ref{fig:Diagrams}, let $\Gamma^0$ be the subdiagram of finite type
obtained by removing vertex $0$, and let $\liea{g}$ be the Chevalley
algebra of type $\Gamma^0$ over a field of characteristic unequal
to $2$. For $x \in \Pi^0$ let $E_x^0 \in \liea{n}_+$ be the element
of the Chevalley basis of $\liea{g}$ with simple root $\alpha_x$
and for the lowest root $\theta \in \Phi^0_-$ let $E_{\theta}^0 \in
\liea{g}/\liea{n}_+$ be the image of the element in the Chevalley basis
of weight $\theta$. Then the map sending $x \in \Pi^0$ to $E_x^0$ and
$x_0$ to $E_{\theta}^0$ induces a $\ZZ^\Pi$-graded isomorphism $\mL(0)
\rightarrow \liea{n}_+ \ltimes \liea{g}/\liea{n}_+$ of Lie algebras.
\end{thm}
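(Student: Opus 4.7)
The plan is to mirror the three-step proof of Theorem \ref{thm:Finite}: build a Lie algebra homomorphism $\pi:\mL(0)\to\liea{u}$ sending the generators as prescribed, establish surjectivity, and bound $\dim\mL(0)$ weight by weight to force injectivity. The overall skeleton is essentially forced by the construction, but two genuinely new ingredients show up.

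To construct $\pi$, I would check the defining extremality and commutation relations of $\mL(0)$ in $\liea{u}$. The commutation relations $[x,y]=0$ for $x\not\sim y$ split in two cases. For $x,y\in\Pi^0$, their images $E_x^0,E_y^0$ commute in $\liea{n}_+$ because $\alpha_x+\alpha_y$ is not a root of $\Phi^0$. For $y\in\Pi^0$ with $y\not\sim x_0$, the bracket $[E_y^0,E_\theta^0]$ in $\liea{u}$ is the image of $[E_y^0,E_{-\psi}^0]\in\liea{g}$ in $M:=\liea{g}/\liea{n}_+$; via $\alpha_0=\delta-\psi$, the non-adjacency $y\not\sim x_0$ becomes $\la\alpha_y,\psi\ra=0$, so $\alpha_y-\psi$ is not a root and the bracket vanishes. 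For the extremality $\ad(\pi(x))^2=0$ with $x\in\Pi^0$, the standard simply-laced argument in $\Phi^0$ (root strings of length at most two) shows $\ad(E_x^0)^2$ kills every root vector $E_\beta^0\in\liea{g}$ except $\beta=-\alpha_x$, on which $\ad(E_x^0)^2 E_{-\alpha_x}^0=-2E_x^0\in\liea{n}_+$ vanishes modulo $\liea{n}_+$, so $\ad(E_x^0)^2=0$ on $M$ too. For $x=x_0$ the image $E_\theta^0$ lies in the abelian ideal $M$, and the semidirect-product bracket forces $\ad(E_\theta^0)^2=0$ automatically: one application of $\ad(E_\theta^0)$ sends $\liea{n}_+$ into $M$, which is annihilated by the next.

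For surjectivity, the $E_x^0$, $x\in\Pi^0$, generate $\liea{n}_+$ as a Lie algebra by Theorem \ref{thm:Finite} applied to $\Gamma^0$, and the $\liea{n}_+$-submodule of $M$ generated by $\overline{E_{-\psi}^0}$ is all of $M$: since $\liea{g}$ is simple with irreducible adjoint representation and $E_{-\psi}^0$ is a lowest weight vector (annihilated by $\liea{n}_-$ and an eigenvector for $\liea{h}$), PBW gives $U(\liea{n}_+)\cdot E_{-\psi}^0+\liea{n}_+=\liea{g}$. For the dimension bound, Lemma \ref{lm:Roots} restricts nonzero weights of $\mL(0)$ to $\Phi_+(\gKM)$. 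For $\mu\in\Phi^0_+$ the usual very-real decomposition in $\Gamma^0$ combined with Lemma \ref{lm:VeryReal} yields $\dim\mL(0)_\mu\le 1$. For $\mu=\delta-\gamma$ with $\gamma\in\Phi^0_+$, I would induct on the height of $\gamma$: pick $y\in\Pi^0$ with $\la\alpha_y,\gamma\ra=-1$ (which exists whenever $\gamma\ne\psi$), so that $\gamma+\alpha_y\in\Phi^0_+$ and Lemma \ref{lm:MultOne} gives $\dim\mL(0)_{\delta-\gamma}\le\dim\mL(0)_{\delta-(\gamma+\alpha_y)}$, with base case $\gamma=\psi$ giving $\mu=\alpha_0$ of multiplicity one.

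The main obstacle lies in the two remaining weight classes. For $\mu=\delta$, one gets $\mL(0)_\delta=\sum_{x\in\Pi}[x,\mL(0)_{\delta-\alpha_x}]$, each summand at most one-dimensional by the cases above, but this only yields the crude bound $|\Pi|=|\Pi^0|+1$; the sharp bound $|\Pi^0|$ requires producing a single linear dependence among these $|\Pi|$ distinguished vectors, reflecting that $\delta$ spans the one-dimensional kernel of the affine Cartan matrix. For $\mu\in\Phi_+(\gKM)\setminus\Theta$---all of height exceeding $h$---I would show $\mL(0)_\mu=0$ by splitting $\mL(0)$ by its $\alpha_0$-grading $\mL(0)=\bigoplus_k\mL(0)^{(k)}$: first proving $\mL(0)^{(k)}=0$ for $k\ge 2$ using Lemma \ref{lm:AtLeastTwo} on consecutive occurrences of $x_0$ in monomials (and the small number of $\Gamma$-neighbours of $x_0$ in an affine diagram), and then ruling out weights $\delta+\beta$ with $\beta\in\Phi^0_+$ in $\mL(0)^{(1)}$ via repeated application of Lemma \ref{lm:Reduce} and $\liea{sl}_2$-type root-string analysis in $\gKM$. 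This nilpotency-of-depth-$h$ statement is the technically hardest part of the proof.
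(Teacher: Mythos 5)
Your skeleton (build the homomorphism, prove surjectivity, bound $\dim\mL(0)$ weight by weight over $\Theta$) is exactly the paper's, and your treatment of the defining relations, of surjectivity, and of the multiplicity-one bounds for weights in $\Phi^0_+$ and $\delta+\Phi^0_-$ is sound --- your upward induction on $\mathrm{ht}(\gamma)$ via Lemma \ref{lm:MultOne} is the same mechanism as the paper's ``very real'' induction, just run in the opposite direction. But the two places you yourself flag as obstacles are precisely where the proposal has genuine gaps. For $\mu=\delta$ you stop at the crude bound $|\Pi|$ and assert that the sharp bound $|\Pi^0|$ ``requires producing a single linear dependence'' among the $|\Pi|$ vectors $[x,\mL(0)_{\delta-\alpha_x}]$, without producing one. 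No such dependence is needed: one shows directly that $\mL(0)_\delta\subseteq\sum_{x\in\Pi^0}[x,\mL(0)_{\delta-\alpha_x}]$, i.e.\ that the $x_0$-summand is redundant. Since the coefficient of $x_0$ in $\delta$ is $1$, a monomial of weight $\delta$ beginning with $x_0$ contains no further occurrence of $x_0$, and the Jacobi identity plus an easy induction rewrites it as a linear combination of monomials beginning with letters from $\Pi^0$. That is the missing step.

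The second gap is the vanishing of $\mL(0)_\mu$ for $\mu\in\Phi_+\setminus\Theta$, which you defer to an $x_0$-grading argument that is both unexecuted and, as sketched, unlikely to go through: Lemma \ref{lm:AtLeastTwo} requires at most \emph{one} occurrence of a $\Gamma$-neighbour of $x_0$ between two consecutive occurrences of $x_0$, and nothing prevents the (unique, in types $D$ and $E$) neighbour of $x_0$ from occurring many times in that gap; moreover weights with $x_0$-coordinate at least $2$, such as $n\delta+\beta$, are still roots of $\gKM$, so Lemma \ref{lm:Roots} gives you nothing there and a separate combinatorial analysis would be needed. The efficient argument makes this the \emph{easiest} part rather than the hardest: since $\la\alpha_x,\delta\ra=0$ for all $x\in\Pi$, Lemma \ref{lm:Reduce} gives $[x,\mL(0)_\delta]=0$ for every $x$. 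Every root in $\Phi_+\setminus\Theta$ has height greater than $h$, and $\delta$ is the unique root of height $h$; hence for any monomial of such weight either some partial sum of its simple-root weights fails to lie in $\Phi_+$ (and the monomial vanishes by Lemma \ref{lm:Roots}), or the chain of partial sums passes through $\delta$, and the monomial vanishes because $[x,\mL(0)_\delta]=0$. You should replace your proposed route by this one.
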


\begin{re}
Over $\CC$ one can argue directly in the Kac-Moody algebra $\gKM$. Then
$\mL(0)$ is also the quotient of the positive nilpotent subalgebra
of $\gKM$ by the root spaces with roots of height larger than the
Coxeter number $h$. In the proof one uses the root multiplicities
of \cite[Proposition 6.3]{Kac90}. One might also pursue this approach
in positive characteristic using the results of \cite{Billig91}, but
we have chosen to avoid defining the Kac-Moody algebra in arbitrary
characteristic and use the Chevalley basis instead.
\end{re}

\begin{proof}[Proof of Theorem \ref{thm:Affine}.]
The proof is close to that of Theorem \ref{thm:Finite}.
We start by verifying that the relations defining $\mL(0)$
hold in $\liea{u}=\liea{n}_+ \ltimes \liea{g}/\liea{n}_+$. First, $E_x^0$
and $E_y^0$ with $x,y \in \Pi^0$ commute when they are not connected in
$\Gamma$; this follows from the defining equations of $\gKM$. Second,
$E_x^0$ and $E_\theta^0$ commute if $x \in \Pi^0$ is not connected to $x_0$,
as $\theta+\alpha_x$ is then not in $\Phi^0$. Third, each $E_x^0$ is
a sandwich element in $\liea{u}$: for its action on $\liea{n}_+$ this
follows as in the proof of Theorem \ref{thm:Finite} and for its action
on $\liea{g}/\liea{n}_+$ it follows from the fact that $\ad(E_x^0)^2
\liea{g} \subseteq KE_x^0 \subseteq \liea{n}_+$. Fourth, $E_\theta^0$
is a sandwich element as $\ad(E_\theta^0)$ maps $\liea{u}$
into $\liea{g}/\liea{n}_+$, which has trivial multiplication. This
shows the existence of a homomorphism $\pi: \mL(0) \rightarrow
\liea{u}$. Moreover $\pi$ is graded; in particular, the weight of
$E_\theta^0$ is $\delta+\theta=\alpha_{x_0}$.

The $E^0_x$ generate $\liea{n}_+$ and $E_\theta^0$ generates the
$\liea{n}_+$-module $\liea{g}/\liea{n}_+$. These statements follow from
properties of the Chevalley basis in Subsection \ref{ssec:FiniteSandwich},
and imply that $\pi$ is surjective. So we need only show that $\dim \mL(0)
\leq \dim \liea{u}$; we prove this for each weight in $\Theta$.

First, the roots in $\Phi^0$ are very real, so their multiplicities
in $\mL(0)$ are at most $1$ by Lemma \ref{lm:VeryReal}. Second,
we claim that all roots of the form $\delta + \beta$ with $\beta
\in \Phi^0_-$ are also very real. This follows by induction
on the height of $\beta$: For $\beta=\theta$ it is clear since
$\delta+\theta=\alpha_{x_0}$. For $\beta \neq \theta$ it is well-known
that there exists an $x \in \Pi^0$ such that $\la \alpha_x,\beta
\ra=1$. Then we have $\delta+\beta=(\delta+\beta-\alpha_x)+\alpha_x$ where
$\delta+\beta-\alpha_x \in \Theta$ and $\la \alpha,\delta+\beta-\alpha_x
\ra=0 + 1 - 2=-1$; here we use that $\delta$ is in the radical of the
form $\la.,.\ra$. By induction, $\delta+\beta-\alpha_x$ is very real,
hence so is $\delta+\beta$. This shows that also the roots of the form
$\delta+\beta$ with $\beta \in \Phi^0_-$ have multiplicity at most $1$
in $\mL(0)$, again by Lemma \ref{lm:VeryReal}.

Next we show that $\delta$ has multiplicity at most $|\Pi^0|=\dim
\liea{h}$ in $\mL(0)$. Indeed, we claim that $\mL(0)_\delta$ is
contained in
\[ \sum_{x \in \Pi^0} [x,\mL(0)_{\delta-\alpha_x}]. \]
Then, by the above, each of the summands has dimension at most $1$,
and we are done. The claim is true almost by definition: any monomial of
weight $\delta$ must start with some $x \in \Pi$, so we need only show
that monomials starting with $x_0$ are already contained in the sum
above. Consider any monomial $z:=x_d \cdots x_1$ of weight $\delta$,
where $x_d=x_0$. As the coefficient of $x_0$ in $\delta$ is $1$, none
of the $x_i$ with $i<d$ is equal to $x_0$. But then an elementary
application of the Jacobi identity and induction shows that $z$ is a
linear combination of monomials that do {\em not} start with $x_0$.

Finally we have to show that $\mL(0)_\mu$ is $0$ for $\mu \in \Phi_+
\setminus \Theta$ (we already have $\mL(0)_\mu=0$ for $\mu \not \in
\Phi_+$ by Lemma \ref{lm:Roots}). But Lemma \ref{lm:Reduce} and the
fact that $\la \alpha_x, \delta \ra=0$ for all $x \in \Pi$ together
imply that $[x,\mL(0)_\delta]=0$ for all $x \in \Pi$. So it suffices to
show that if $\mu \in \Phi_+$ is not in $\Theta$, then ``$\mu$ can only
be reached through $\delta$''. More precisely: if $(x_d,\ldots,x_1)$
is any word over $\Pi$ such that $\sum_{j=1}^d \alpha_{x_j}=\mu$ and
$\mu_i:=\sum_{j=1}^i \alpha_{x_j} \in \Phi_+$ for all $i=1,\ldots,d$,
then there exists an $i$ such that $\mu_i=\delta$---but this follows
immediately from the fact that $\delta$ is the only root of height $h$
\cite[Proposition 6.3]{Kac90}. We find that every monomial corresponding
to such a word is zero, and this concludes the proof of the theorem.
\end{proof}

\section{The parameter space and generic Lie algebras}
\label{sec:Parameter}

So far we have only considered the Lie algebras $\mL(0)$. Now we
will be concerned with the variety $X$ of all parameters
$f \in (\mF^* )^\Pi$ for which $\dim \mL(f)=\dim \mL(0)$.
We collect some tools for determining $X$ in the case of simply laced
Dynkin diagrams.

\subsection{Scaling} \label{ssec:Scaling}
First let $\Gamma$ be arbitrary again, not necessarily a Dynkin
diagram. Scaling of the generators $x_i$ has an effect on $X$: 
Given $t=(t_x)_{x \in \Pi}$ in the torus $T:= (K^*)^\Pi$ there is a unique
automorphism of $\mF $ that sends $x \in \Pi$ to $t_x x$. This
gives an action of $T$ on $\mF $, and we endow $\mF^* $ with
the contragredient action. Finally, we obtain an action of $T$ on $X$ by
\[ (t f)_x (y):=t_x^{-1} f_x (t^{-1} y) \text{ for all }
    t \in T,\ f \in X,\ x \in \Pi,\text{ and } y \in
    \mF .
\]
Indeed, note that with this definition the automorphism of $\mF $
induced by $t$ sends $xxy-f_x(y)x \in \mF $ to
\begin{align*}
(tx)(tx)(ty) - f_x(y) tx&=t_x^2(xx(ty)-t_x^{-1}f_x(y)x)\\
&=t_x^2(xx(ty)-t_x^{-1}f_x(t^{-1}(ty))x)\\
&=t_x^2(xx(ty)-(tf)_x(ty)x),
\end{align*}
and hence the ideal $\mI(f) $ defining $\mL(f)$ to
$\mI(tf)$. Therefore, this automorphism of $\mF $ induces an
isomorphism $\mL(f) \rightarrow L(tf,\Gamma)$.

This scaling action of $T$ on $X$ will make things very easy in the
case of simply laced Dynkin diagrams, where $X$ will turn out to be
isomorphic to an affine space with linear action of $T$, in which the
maximal-dimensional orbits have codimension $0$, $1$ or $2$.

\begin{re}
Observe that the one-parameter subgroup $t \mapsto
\lambda(t):=(t,\ldots,t) \in T$ satisfies $\lim_{t \rightarrow \infty}
\lambda(t)f=0$ for all $f \in X$. This shows that all irreducible
components of $X$ contain $0$; in particular, $X$ is connected.
\end{re}

\subsection{The extremal form} \label{ssec:ExtremalForm}
In \cite{Cohen01} it is proved that on any Lie algebra over a field
of characteristic unequal to $2$ that is generated by finitely many
extremal elements, there is a unique bilinear form $\kappa$ such that
$xxy=\kappa(x,y)x$ for all extremal $x$. Moreover, it is shown there
that $\kappa$ is symmetric and associative: $\kappa(x,y)=\kappa(y,x)$
and $\kappa(xy,z)=\kappa(x,yz)$ for all $x,y,z$.  We call $\kappa$
the {\em extremal form}. For the Chevalley algebra $\liea{g}$
of Subsection \ref{ssec:FiniteSandwich} the extremal form is
non-zero on $\liea{g}_\alpha \times \liea{g}_\beta$ if and only if
$\alpha=-\beta$. The form may have a radical contained in the Cartan
subalgebra $\liea{h}$.

On the other hand, for any $f \in (\mF^* )^\Pi$ (not necessarily in $X$)
the Lie algebra $\mL(f)$ is generated by the images of the elements
of $\Pi$, which are extremal elements. In particular, for $x \in \Pi$
we have in $\mL(f)$
\[ f_x(y) x = xxy = \kappa(x,y) x, \]
where $\kappa$ is the extremal form on $\mL(f)$. So if the image of $x$
in $\mL(f)$ is non-zero, then $f_x(y)=\kappa(x,y)$.

\subsection{The Premet relations} \label{ssec:Premet}

Our arguments showing that certain monomials $m:=x_d \cdots x_1$ are
zero in the sandwich algebra $\mL(0)$ always depended on the sandwich
properties: $xxy=0$ and $xyxz=0$ whenever $x$ is a sandwich element and
$y,z$ are arbitrary elements of the Lie algebra. The Premet relations
of the following lemma translate such a statement into the following
statement: in $\mL(f)$ the monomial $m$ can be expressed in terms
of monomials of degree less than $d-1$ and values of $f_{x_d}$ on
monomials of degree less than $d-1$.

\begin{lm}[\cite{Chernousov89}]
Let $x$ be a non-zero extremal element of a Lie algebra $\mL$, and let $f_x:\mL
\rightarrow K$ be the linear function with $xxy=f_x(y)x$. Then we have
\[ 2xyxz=f_x(yz)x - f_x(z)xy - f_x(y)xz. \]
\end{lm}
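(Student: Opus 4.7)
The plan is to derive the identity by two successive applications of the Jacobi identity, each time using the extremality relation $xxu=f_x(u)\,x$ to collapse a double-$x$ factor. Throughout I use the shorthand $abc=[a,[b,c]]$ from Section~\ref{sec:Moduli} and Jacobi in the form $[a,[b,c]]=[[a,b],c]+[b,[a,c]]$.

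First I rewrite the inner bracket of $xyxz=[x,[y,[x,z]]]$: Jacobi gives $[y,[x,z]]=[[y,x],z]+[x,[y,z]]$, so
\[
xyxz=[x,[[y,x],z]]+[x,[x,[y,z]]]=[x,[[y,x],z]]+f_x(yz)\,x,
\]
where the second equality is extremality applied to $y z$.

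Next I expand $[x,[[y,x],z]]$ by Jacobi a second time, obtaining $[[x,[y,x]],z]+[[y,x],[x,z]]$. For the first summand, $[x,[y,x]]=-[x,[x,y]]=-f_x(y)\,x$, so it contributes $-f_x(y)\,xz$. For the second summand I apply Jacobi in the form $[[a,b],c]=[a,[b,c]]-[b,[a,c]]$ with $a=y$, $b=x$, $c=[x,z]$:
\[
[[y,x],[x,z]]=[y,[x,[x,z]]]-[x,[y,[x,z]]]=f_x(z)\,[y,x]-xyxz=-f_x(z)\,xy-xyxz,
\]
using $xxz=f_x(z)\,x$ and $[y,x]=-xy$.

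Combining the pieces yields
\[
xyxz=f_x(yz)\,x-f_x(y)\,xz-f_x(z)\,xy-xyxz,
\]
and moving the last $xyxz$ to the left gives the stated Premet relation. There is no conceptual obstacle; the only delicate part will be sign bookkeeping (most notably $[y,x]=-xy$ and the minus sign that appears in the second form of Jacobi) and tracking which of the three Jacobi rewrites is being applied to which subbracket. Observe also that nothing here requires inverting $2$, which is consistent with the statement retaining the factor $2$ on the left-hand side.
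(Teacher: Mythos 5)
Your computation is correct and complete: both applications of the Jacobi identity are in valid forms, the collapse of $[x,[x,[y,z]]]$ to $f_x(yz)x$, of $[x,[y,x]]$ to $-f_x(y)x$, and of $[y,[x,[x,z]]]$ to $-f_x(z)\,xy$ are all right, and the signs assemble to $2xyxz=f_x(yz)x-f_x(z)xy-f_x(y)xz$ as claimed. Note that the paper itself gives no proof of this lemma --- it is stated with a citation to Chernousov --- so there is no in-paper argument to compare against; your derivation simply supplies the elementary verification that the paper delegates to the reference. Your closing observation that the identity is obtained without dividing by $2$ (the factor $2$ stays on the left) is also correct and is exactly why the relation is usable in characteristic $2$ only as part of the \emph{definition} of extremality, as the paper remarks immediately after the lemma.
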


\begin{re}
In characteristic $2$ the definition of an extremal element $x$
involves the existence of a function $g_x$ such that $xyxz=g_x(yz)x
- g_x(z)xy - g_x(y)xz$, i.e., $g_x$ plays the role of $f_x/2$. See
\cite[Definition 14]{Cohen06}.
\end{re}

\subsection{The parameters}
Recall from Section \ref{sec:Moduli} that the restriction map $X
\rightarrow (V^*)^\Pi$ is injective and has a closed image; a key step
in the proof was showing that for $f \in X$ the values $f_x(u),x \in
\Pi,u \in F$ depend polynomially on $f|_V$. In what follows,
this will be phrased informally as {\em $f$ can be expressed in $f|_V$}
or {\em $f|_V$ determines $f$}. In this phrase we implicitly make the
assumption that $f \in X$, i.e., that $\mL(f)$ has the maximal possible
dimension. In the case of Dynkin diagrams, we will exhibit a small number
of values of $f$ in which $f$ can be expressed. For this purpose, the
following lemma, which also holds for other graphs, is useful.

\begin{lm} \label{lm:Expressible}
Let $q=x_d \cdots x_1$ be a monomial of degree $d \geq 2$ and weight
$\beta$, and let $z \in \Pi$ be such that $\la \alpha_z, \beta \ra
\geq -1$. Then $f_z(q)$ can be expressed in the parameters $f_x(m)$
with monomials $m$ of degree less than $d-1$ and $x \in \Pi$.
\end{lm}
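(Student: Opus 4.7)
The plan is to prove this lemma by strong induction on $d$, mirroring the proof of Lemma~\ref{lm:Reduce} but using the Premet relations of Subsection~\ref{ssec:Premet} in place of the sandwich identity $[x,[x,y]]=0$. Since $f_z(q)\,z = zzq$ in $\mathcal{L}(f)$, the task reduces to computing $zzq$ modulo $\mathcal{I}(f)$.

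For the base case $d=2$, write $q=[x_2,x_1]$. The hypothesis $\langle\alpha_z,\alpha_{x_1}+\alpha_{x_2}\rangle\geq -1$ excludes only the configuration where $z$ is a neighbour of both $x_1$ and $x_2$, and a case analysis on the remaining possibilities (using Jacobi and the extremality of $z$) shows $[z,[z,q]]=0$ in $\mathcal{L}(f)$, giving $f_z(q)=0$. This is vacuously a polynomial in no parameters.

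For the inductive step $d\geq 3$, note that $\langle\alpha_z,\alpha_z+\beta\rangle = 2+\langle\alpha_z,\beta\rangle\geq 1\geq 0$, so Lemma~\ref{lm:Reduce} applied to the length-$(d+2)$ word $(z,z,x_d,\ldots,x_1)$ asserts $zzq=0$ in $\mathcal{L}(0)$. Its proof consists of Jacobi-and-commutation rearrangements of $zzq$ that isolate at the bottom of the word a sub-pattern matching either the sandwich identity or the identity $xyxz=0$ of Lemma~\ref{lm:Sandwich}. I would perform the same rearrangements in $\mathcal{L}(f)$ and substitute instead the Premet values
\[
xxy = f_x(y)\,x
\qquad\text{and}\qquad
2\,xyxz = f_x(yz)\,x - f_x(z)\,xy - f_x(y)\,xz,
\]
expressing $zzq$, and hence $f_z(q)\,z$, as a polynomial in $f$-values times monomials of strictly smaller degree. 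Extracting the coefficient of $z$ then gives the desired polynomial expression for $f_z(q)$.

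The main verification is the degree bound on the introduced $f$-values. The trivial sub-cases, where $z$ does not appear in $q$ or where $x_d=z$, both yield $f_z(q)=0$ directly. In the remaining sub-case, the interval in $zzq$ chosen by the proof of Lemma~\ref{lm:Reduce} has its lower copy of $z$ at some position $j\leq d-1$, so the tail $y=x_{j-1}\cdots x_1$ has degree at most $d-2$, and the Premet values $f_z(y)$ and $f_z(x_{i-1})$ (where $x_{i-1}$ is the unique $\Gamma$-neighbour of $z$ in the interval) already satisfy the required bound $<d-1$. The main obstacle is the borderline Premet term $f_z([x_{i-1},y])$, whose argument has degree $j\leq d-1$: when $j=d-1$, one must reduce it further by invoking the inductive hypothesis on the degree-$(d-1)$ monomial $[x_{i-1},y]$, after verifying that the weight condition $\langle\alpha_z,\mathrm{wt}([x_{i-1},y])\rangle\geq -1$ is inherited from the global hypothesis. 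This weight bookkeeping, which relies on a direct count of occurrences of $z$ and its neighbours at positions below $j$, is the essential technical content of the proof.
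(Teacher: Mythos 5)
Your overall plan---expand $zzq$ in $\mL(f)$ by redoing the sandwich-based reductions with the Premet relations---is not the paper's route, and as set up it has a genuine gap. The paper does not touch the degree-$(d{+}2)$ monomial $zzq$ at all; instead it uses the associativity of the extremal form: $f_z(x_d\cdots x_1)=\kappa(z,[x_d,x_{d-1}\cdots x_1])=-\kappa([x_d,z],x_{d-1}\cdots x_1)$, which is $0$ if $x_d\not\sim z$, and equals $f_{x_d}(z\,x_{d-1}\cdots x_1)$ if $x_d\sim z$. This single step drops the monomial to be reduced from degree $d+2$ to degree $d$ and converts the hypothesis $\la\alpha_z,\beta\ra\geq -1$ into exactly the hypothesis $\la\alpha_z,\beta-\alpha_{x_d}\ra\geq 0$ of Lemma \ref{lm:Reduce}; the Premet-ified version of that lemma then yields $f$-values only on monomials of degree less than $d-1$, as required. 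Your version forfeits both of these gains.

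Concretely, two things go wrong. First, when you apply the proof of Lemma \ref{lm:Reduce} to the word $(z,z,x_d,\ldots,x_1)$, the pair of consecutive occurrences of $z$ that it is guaranteed to select is the adjacent pair at the front, and reducing via that pair in $\mL(f)$ returns the tautology $zzq=f_z(q)z$ rather than an expression for $f_z(q)$. A usable lower pair need not exist under the hypothesis $\la\alpha_z,\beta\ra\geq -1$: e.g.\ take $q=x_5x_4x_3x_2x_1$ with $x_3=z$, with $x_5,x_4,x_1$ neighbours of $z$ and $x_2$ neither equal nor adjacent to $z$; then $\la\alpha_z,\beta\ra=-1$, the only non-adjacent pair of $z$'s has two neighbours of $z$ between them, and there is a neighbour of $z$ below the lower occurrence, so neither Lemma \ref{lm:AtLeastTwo} nor the endgame of Lemma \ref{lm:Reduce} applies. (The counting in Lemma \ref{lm:Reduce} needs $\la\alpha_z,\cdot\ra\geq 0$ for the \emph{tail}; prepending two copies of $z$ only buys you the useless adjacent pair.) Second, even when a usable pair $(i,j)$ exists, your borderline Premet term $f_z([x_{i-1},x_{j-1}\cdots x_1])$ with $j=d-1$ cannot in general be handled by induction on the lemma itself: the required condition $\la\alpha_z,\alpha_{x_{i-1}}+\alpha_{x_{j-1}}+\cdots+\alpha_{x_1}\ra\geq -1$ is not inherited from $\la\alpha_z,\beta\ra\geq -1$, since deleting the letters $x_d,\ldots,x_j$ (which include an occurrence of $z$, contributing $+2$) and inserting $\alpha_{x_{i-1}}$ can decrease the pairing below $-1$. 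You flag this verification as the technical heart of the argument, but it is precisely the point where the approach breaks; the associativity trick is the missing idea that makes the bookkeeping come out right.
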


\begin{proof}
First, if $x_d$ is not a
$\Gamma$-neighbour
of $z$ in $\Gamma$, then
\[ f_z(x_d\cdots x_1)=\kappa(z,x_d \cdots x_1)=-\kappa(x_d z,x_{d-1}
\cdots x_1)=\kappa(0,x_{d-1}\cdots x_1)=0, \]
and we are done. So assume that $x_d$ is a
$\Gamma$-neighbour
of $z$. Now
\begin{align*}
f_z(x_d\cdots x_1)&=-\kappa(x_d z, x_{d-1} \cdots x_1)=\kappa(z x_d, x_{d-1}\cdots x_1)\\
&=-f(x_d,z x_{d-1} \cdots x_1)=f_{x_d}(z x_{d-1}\cdots
x_1).
\end{align*}
In both cases we have used that the images of $z$ and $x_d$ are non-zero
in $\mL(f)$ for $f \in X$; see Subsection \ref{ssec:ExtremalForm}.
Now $\la \alpha_z, \beta-\alpha_{x_d} \ra \geq 0$, so  Lemma
\ref{lm:Reduce} says that $z x_{d-1} \cdots x_1$ can be expressed in terms
of smaller monomials and values $f_x(m)$ for $x \in \Pi$ and monomials
$m$ of degree less than $d-1$. Then by linearity of $f_{x_d}$ the last
expression above can also be expressed in terms of values $f_x(m)$
with $x \in \Pi$ and $m$ of degree less than $d-1$.
\end{proof}

\subsection{Simply laced Dynkin diagrams of finite type}

Suppose that $\Gamma$ is a simply laced Dynkin diagram of finite type. Let
$\liea{g}$ be the Chevalley algebra with Dynkin diagram $\Gamma$ of
Subsection \ref{ssec:FiniteSandwich}. We identify $\ZZ^\Pi$ with the
character group of $T=(K^*)^\Pi$ in the natural way: we write $\mu$ for
the character that sends $t$ to $t^\mu=\prod_{x \in \Pi} t_x ^{\mu_x}$.
Furthermore, let $\Sigma$ be the set of edges of $\Gamma$, and write
$\alpha_e:=\alpha_x+\alpha_y$ for $e=\{x,y\} \in \Sigma$.

\begin{thm} \label{thm:GenericFinite}
Let $\Gamma=(\Pi,\Sigma)$ be a simply laced Dynkin diagram of finite type,
obtained from a diagram in Figure \ref{fig:Diagrams} by removing vertex
$0$. Let $\liea{g}$ be the Chevalley algebra of type $\Gamma$ over the
field $K$ of characteristic unequal to $2$, and set $T:=(K^*)^\Pi$. Then
the variety $X$ is, as a $T$-variety, isomorphic to the vector space
$V:=K^\Sigma$ on which $T$ acts diagonally with character $-\alpha_e$
on the component corresponding to $e \in \Sigma$.  For $f$ corresponding
to any element in the dense $T$-orbit $(K^*)^\Sigma$ the Lie algebra $\mL(f)$
is isomorphic to a fixed Lie algebra.
\end{thm}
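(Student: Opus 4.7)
The plan is to build a $T$-equivariant closed embedding $\Phi : X \hookrightarrow K^\Sigma$, show its image is all of $K^\Sigma$, and deduce the fixed-generic-algebra claim from the $T$-orbit structure of $(K^*)^\Sigma$. I begin by reducing every value of $f \in X$ to the edge parameters $c_e := f_x(y)$ for $e = \{x,y\} \in \Sigma$. For non-edges (including $x = y$) the relation $[x,y] = 0$ in $\mF$ forces $f_x(y)\cdot x = [x,[x,y]] = 0$ in $\mL(f)$; since the image of $x$ in $\mL(f)$ is nonzero (otherwise $\dim \mL(f) < \dim \mL(0)$), we conclude $f_x(y) = 0$. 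Symmetry of the extremal form (Subsection~\ref{ssec:ExtremalForm}) gives $f_y(x) = f_x(y) = c_e$, so only $|\Sigma|$ parameters survive. For values $f_z(v)$ with $v$ a monomial in a graded complement $V$ of $\mI(0)$ of positive-root weight $\beta \in \Phi_+$ and height at least $2$, Lemma~\ref{lm:Expressible} applies: in simply laced finite type $\la \alpha_z, \beta \ra \in \{-1,0,1,2\}$, with value $2$ only when $\beta = \alpha_z$, which is excluded by the height constraint. Combined with Theorem~\ref{thm:Variety}, this expresses every $f_x(u)$ polynomially in the $c_e$, so $\Phi : f \mapsto (c_e)_{e \in \Sigma}$ is a well-defined injective morphism.

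The image $\Phi(X) \subseteq K^\Sigma$ is closed by the same recipe as in the proof of Theorem~\ref{thm:Variety}, with $K^\Sigma$ replacing $(V^*)^\Pi$: one writes down a candidate bracket on $V$ with coefficients polynomial in the $c_e$, and the conditions of anti-commutativity, Jacobi, and extremality cut out $\Phi(X)$ as a closed subvariety. The scaling action (Subsection~\ref{ssec:Scaling}) sends $f_x(y)$ to $t_x^{-1} t_y^{-1} f_x(y) = t^{-\alpha_e} f_x(y)$, so $\Phi$ is $T$-equivariant with the prescribed characters $-\alpha_e$. Because $\Gamma$ is a tree, the $|\Sigma|$ vectors $\{\alpha_e\}$ are linearly independent in $\ZZ^\Pi$ (peel off a leaf: in any relation $\sum n_e \alpha_e = 0$, the coefficient at the unique incident edge is forced to zero, and induct), so each $T$-weight space in $K[c_e : e \in \Sigma]$ is spanned by a single monomial and every $T$-stable closed subvariety of $K^\Sigma$ is a union of coordinate subspaces.

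It therefore suffices to exhibit a single $f^* \in X$ with $c_e^* \neq 0$ for every $e$; this is the main obstacle. One needs a concrete Lie algebra of dimension $\dim \mL(0)$, generated by $|\Pi|$ extremal elements realizing the commutation pattern of $\Gamma$ and with pairwise nonzero extremal form. I would attempt this by realizing the generators as long-root vectors inside a sufficiently large ambient Chevalley algebra and verifying the dimension weight-by-weight, or alternatively by deforming the sandwich algebra edge-by-edge along the tree, with base case the two-generator $\liea{sl}_2$-model of the introductory example. Granted such $f^*$, surjectivity of $\Phi$ is immediate from the coordinate-subspace structure of the image. For the generic-algebra claim, since $\Gamma$ is a tree the map $T \to (K^*)^\Sigma$, $t \mapsto (t^{\alpha_e})_e$, is surjective: starting from a leaf $x_0$ with unique incident edge $\{x_0, y_0\}$, the equation $t_{x_0} t_{y_0} = s_{\{x_0,y_0\}}$ determines $t_{x_0}$ from $t_{y_0}$ and one inducts on $|\Pi|$. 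Hence $(K^*)^\Sigma$ is a single $T$-orbit, dense in $K^\Sigma$, and since the $T$-action on $X$ induces Lie-algebra isomorphisms $\mL(f) \to \mL(tf)$, all $\mL(f)$ with $\Phi(f) \in (K^*)^\Sigma$ are mutually isomorphic.
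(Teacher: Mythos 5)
Your reduction of $f\in X$ to the edge parameters $c_e=f_x(y)$, the closedness of the image, the $T$-equivariance, and the tree argument giving linear independence of the characters $\alpha_e$ all coincide with the paper's proof. The one step you do not carry out is exactly the one the theorem hinges on: producing a single $f^*\in X$ with $c_e^*\neq 0$ for every edge $e$. You correctly flag this as ``the main obstacle'' but then only name two candidate strategies without executing either. As it stands your argument cannot exclude that the image of your embedding $\Phi$ is a proper coordinate subspace $(K^*)^{\Sigma'}\times\{0\}^{\Sigma\setminus\Sigma'}$ with $\Sigma'\subsetneq\Sigma$, in which case both the isomorphism $X\cong K^\Sigma$ and the dense-orbit claim would fail. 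So this is a genuine gap, not a stylistic difference.

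The paper closes the gap with Lemma \ref{lm:GenericFinite}, which is close in spirit to your first suggestion but needs neither a ``sufficiently large ambient'' algebra nor a weight-by-weight dimension check. One works inside the Chevalley algebra $\liea{g}$ of type $\Gamma$ itself and considers the irreducible variety $C$ of tuples $(G_x)_{x\in\Pi}$ with $G_x$ extremal in the triple $\la F_x^0,H_x^0,E_x^0\ra\cong\liea{sl}_2$. The locus in $C$ where the $G_x$ generate a subalgebra of dimension less than $\dim\liea{n}_+$ is closed and misses the point $(E_x^0)_{x\in\Pi}$, since Theorem \ref{thm:Finite} identifies $\mL(0)$ with $\liea{n}_+$ and hence identifies $\dim\liea{n}_+$ as the maximal possible dimension; the locus where $G_xG_xG_y=0$ for some $x\sim y$ is likewise closed and proper, by a computation in the copy of $\liea{sl}_3$ spanned by the two triples (the two root $\liea{sl}_2$'s are not mutually perpendicular for the extremal form). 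Irreducibility of $C$ then yields a tuple avoiding both loci, hence a point of $X$ mapping into $(K^*)^\Sigma$, and the rest of your argument goes through. Your second suggestion, deforming the sandwich algebra edge by edge, is not obviously workable: extending a deformation across a new edge requires controlling the dimension of the whole algebra at each stage, which is precisely the hard part that the irreducibility trick circumvents.
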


We first need a lemma that will turn out to describe the generic
$\mL(f)$.  We retain the notation $E_x^0,H_x^0,F_x^0 \in \liea{g}$
and $\liea{n}_+$ from Subsection \ref{ssec:FiniteSandwich}.
We denote by $C$ the variety of tuples $(G_x)_{x \in \Pi}$ with $G_x
\in \la F_x^0, H_x^0, E_x^0 \ra \cong \liea{sl}_2$ extremal; $C$ is an
irreducible variety.

\begin{lm} \label{lm:GenericFinite}
For generic $G=(G_x)_{x \in \Pi} \in C$ the Lie subalgebra $\liea{g}'$
of $\liea{g}$ generated by the $G_x$ has dimension $\dim \liea{n}_+$, and
moreover $G_x G_x G_y$ is a non-zero multiple of $G_x$ for all $x \sim y$.
\end{lm}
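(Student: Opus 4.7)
The plan is to exploit the irreducibility of $C$ and show that each of the two assertions in the lemma defines an open, non-empty, and hence dense subset of $C$. First, $C$ is irreducible: for each $x\in\Pi$ the extremal elements of the three-dimensional Lie algebra $\la F_x^0, H_x^0, E_x^0 \ra \cong \liea{sl}_2$ form its nilpotent cone, a single irreducible quadric, and $C$ is a product of these.

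For the dimension statement I first bound $\dim \liea{g}'(G)$ from above. For any $G\in C$ the tuple $(G_x)_{x\in\Pi}$ consists of extremal elements of $\liea{g}$ satisfying $[G_x,G_y]=0$ whenever $x \not\sim y$, because $\liea{sl}_2(x)$ and $\liea{sl}_2(y)$ then commute inside $\liea{g}$. Hence there is an $f\in(\mF^*)^\Pi$ together with a surjection $\mL(f)\twoheadrightarrow \liea{g}'(G)$, yielding $\dim\liea{g}'(G)\le\dim\mL(f)\le\dim\mL(0)=\dim\liea{n}_+$ by Theorem~\ref{thm:Finite}. This upper bound is attained at the point $G_x=E_x^0$, where $\liea{g}'=\liea{n}_+$. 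Since the dimension of the space spanned by a bounded-depth polynomial family of iterated brackets of the $G_x$ is lower semi-continuous on $C$, and equals $\dim\liea{g}'(G)$ once the depth is at least $\dim\liea{g}$, the set $U_1:=\{G\in C:\dim\liea{g}'(G)\ge\dim\liea{n}_+\}$ is open and non-empty, and thus dense in $C$.

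For the bracket condition, note that for each edge $\{x,y\}\in\Sigma$ the map $G\mapsto [G_x,[G_x,G_y]]$ is a polynomial map $C\to KG_x\subseteq\liea{g}$, so its non-vanishing locus $U_{x,y}$ is open; the task is to show $U_{x,y}\ne\emptyset$. The naive choice $G_x=E_x^0$ fails because $\ad(E_x^0)^2$ annihilates $\liea{sl}_2(y)$ entirely in the simply laced case. Instead I take the "generic" nilpotent $G_x:=E_x^0+H_x^0-F_x^0$ (nilpotent since $c^2+ab=1-1=0$) and likewise $G_y:=E_y^0+H_y^0-F_y^0$, with the remaining $G_z$ for $z\ne x,y$ chosen arbitrarily extremal, say $G_z=E_z^0$. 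A direct computation in the Chevalley basis of the $\liea{sl}_3$-subalgebra generated by $\liea{sl}_2(x)$ and $\liea{sl}_2(y)$ then gives $[G_x,[G_x,G_y]]=2G_x$, where the $E_y^0$- and $F_y^0$-components forced by the computation are killed by the relations between simply laced structure constants (as they must be, by extremality of $G_x$). In characteristic unequal to $2$ this is non-zero.

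The intersection $U_1\cap\bigcap_{\{x,y\}\in\Sigma} U_{x,y}$ is therefore a non-empty, and by irreducibility of $C$ dense, open subset on which both conclusions of the lemma hold. The main obstacle is the explicit verification of $[G_x,[G_x,G_y]]\ne 0$ at some $G\in C$; the rest is formal semi-continuity combined with the upper bound from Theorem~\ref{thm:Finite}.
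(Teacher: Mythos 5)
Your proof is correct and follows essentially the same route as the paper's: irreducibility of $C$, the upper bound $\dim\liea{g}'\le\dim\mL(0)=\dim\liea{n}_+$ from Theorem~\ref{thm:Finite} combined with semicontinuity and the witness $(E_x^0)_{x\in\Pi}$ for the dimension statement, and a reduction to the $A_2$ subalgebra $\liea{sl}_3$ for the bracket condition. The only difference is cosmetic: where the paper invokes the fact that the two root $\liea{sl}_2$'s in $\liea{sl}_3$ are not mutually perpendicular under the extremal form, you exhibit the explicit witness $E^0+H^0-F^0$, and your computation $[G_x,[G_x,G_y]]=2G_x$ checks out.
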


\begin{proof}
By definition $\liea{g}'$ is generated by extremal elements, hence it has
dimension at most that of $\mL(0)$, which is isomorphic to $\liea{n}_+$
by Theorem \ref{thm:Finite}. The condition that the $G_x$ generate a
Lie algebra of dimension less than $\dim \liea{n}_+$ is closed, and
the tuple $(E_x^0)_{x \in \Pi} \in C$ does not fulfill it.  Hence using
the irreducibility of $C$ we find that for $G$ in an open 
dense subset of $C$ the Lie algebra $\liea{g}'$ has
$\dim \liea{g}'=\dim \liea{n}_+$.  This proves the first statement. The
second statement follows directly from the same statement for $\Gamma$
of type $A_2$, i.e., for $\liea{g}=\liea{sl}_3$, where it boils down
to the statement that the two copies of $\liea{sl}_2$ in $\liea{sl}_3$
corresponding to the simple roots are not mutually perpendicular relative
to the extremal form in $\liea{sl}_3$.
\end{proof}

\begin{proof}[Proof of Theorem \ref{thm:GenericFinite}]
By Lemma \ref{lm:Expressible} and Theorem \ref{thm:Finite}, any $f \in X$
is determined by its values $f_x(m)$ with $x \in \Pi$ and monomials $m$
of weights $\beta \in \Phi_+$ such that either $\beta$ has height $1$ or
$\la \alpha_x, \beta \ra \leq -2$. But since $\beta$ is a positive root,
the latter inequality cannot hold. Hence $\beta$ has height $1$, so that
$m \in \Pi$, and the only $x \in \Pi$ for which $f_x(m) \neq 0$ are the
neighbours of $m$. Moreover, from the symmetry of the extremal form we
conclude that $f_x(y)=f_y(x)$ for $x,m \in \Pi$ neighbours in $\Gamma$.

We have thus found a closed embedding $\Psi: X \rightarrow K^\Sigma$ sending
$f$ to $(f_x(y))_{\{x,y\} \in \Sigma}$. Now if we let $T$ act on $K^\Sigma$
through the homomorphism
\[ T \rightarrow (K^*)^\Sigma,\ t \mapsto (t_x^{-1} t_y^{-1})_{\{x,y\} \in \Sigma}, \]
then $\Psi$ is $T$-equivariant by the results of Subsection
\ref{ssec:Scaling}. Note that $T$ acts by the character
$-\alpha_e$ on the component corresponding to $e \in \Sigma$.
The fact that $\Gamma$ is a tree readily implies that the characters
$\alpha_e,\ e \in \Sigma,$ are linearly independent over $\ZZ$ in the character
group of $T$, so that the homomorphism $T \rightarrow (K^*)^\Sigma$ is
surjective. But then $T$ has finitely many orbits on $K^\Sigma$, namely of
the form $(K^*)^{\Sigma'} \times \{0\}^{\Sigma \setminus \Sigma'}$ with $\Sigma'\subseteq \Sigma$.

Now as $\Psi(X)$ is a closed $T$-stable subset of $K^\Sigma$ we are done
if we can show that $(K^*)^\Sigma \cap \Psi(X)$ is non-empty.  But this is
precisely what Lemma \ref{lm:GenericFinite} tells us: there exist Lie
algebras $\liea{g}'$ generated by extremal elements that have the largest
possible dimension, and where all coordinates $f_x(y)$ with $x \sim y$
are non-zero. This concludes the proof.
\end{proof}

\begin{re}
The proof above also implies that all Lie algebras described in Lemma
\ref{lm:GenericFinite} are isomorphic. More generally: for any two Lie
algebras $\liea{g}'$ and $\liea{g}''$ with tuples of distinguished,
extremal generators $(G'_x)_{x \in \Pi}$ and $(G''_x)_{x \in \Pi}$
such that
\begin{enumerate}
\item $G'_x G'_y=0$ and $G''_x G_y''=0$ for $x \not \sim y$,
and
\item $G'_x G'_x G'_y = 0$ if and only if $G''_x G''_x G''_y = 0$
for $x \sim y$, and
\item $\dim \liea{g}'=\dim \liea{g}''=\dim \liea{n}_+$,
\end{enumerate}
there exists an isomorphism $\liea{g}' \rightarrow \liea{g}''$ mapping each
$G'_x$ to a scalar multiple of $G''_x$.
\end{re}

\subsection{Simply laced Dynkin diagrams of affine type}
Suppose now that $\Gamma$ is a simply laced Dynkin diagram of affine
type. We retain the notation from Subsection \ref{ssec:AffineSandwich}. In
particular, let $\liea{g}$ be the Chevalley algebra of type $\Gamma^0$,
the graph induced by $\Gamma$ on $\Pi^0=\Pi \setminus \{x_0\}$.
To state the analogue of Theorem \ref{thm:GenericFinite} we again
identify $\ZZ^\Pi$ with the character group of $T=(K^*)^\Pi$ and retain
the notation $\alpha_e$ for $e \in \Sigma$, the edge set of $\Gamma$.

\begin{thm} \label{thm:GenericAffine}
Let $\Gamma=(\Pi,\Sigma)$ be a simply laced Dynkin diagram of affine type
from Figure \ref{fig:Diagrams} and let $\Gamma^0$ be the finite-type
diagram obtained by deleting vertex $0$ from $\Gamma$. Let $\liea{g}$
be the Chevalley algebra of type $\Gamma^0$ over the field $K$ of
characteristic unequal to $2$, and set $T:=(K^*)^\Pi$. Then the variety
$X$ is, as a $T$-variety, isomorphic to the vector space $V:=K^\Sigma
\times K$ on which $T$ acts diagonally with character $-\alpha_e$ on the
component corresponding to $e \in \Sigma$, and with character $-\delta$
on the last component. For all $f \in X$ corresponding to points in
some open dense subset of $K^\Sigma \times K$ the Lie algebra $\mL(f)$
is isomorphic to $\liea{g}$.
\end{thm}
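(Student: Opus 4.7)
The plan is to adapt the proof of Theorem \ref{thm:GenericFinite} to the affine setting. The only structurally new ingredient is the appearance of the null root $\delta$ in the weight set $\Theta$ of $\mL(0)$, which forces exactly one parameter beyond the $|\Sigma|$ edge parameters of the finite-type case.

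First I would trim the parameter space via Lemma \ref{lm:Expressible}. As in finite type, the symmetry of the extremal form yields $f_x(y) = f_y(x)$ and $f_x(y) = 0$ unless $x \sim y$, so degree $1$ contributes one parameter per edge. For a monomial $q$ of degree $\geq 2$ and weight $\beta \in \Theta$, Lemma \ref{lm:Expressible} reduces $f_z(q)$ to lower-degree parameters whenever $\la \alpha_z, \beta \ra \geq -1$. Using $\Theta = \Phi^0_+ \cup \{\delta+\gamma : \gamma \in \Phi^0_-\} \cup \{\delta\}$, the identity $\la \alpha_z, \delta \ra = 0$, and the simply-laced bound $\la \alpha_z, \beta \ra \in \{-1,0,1,2\}$ for $\beta \in \Phi^0_+$, a case check shows that $\la \alpha_z, \beta \ra \leq -2$ occurs in $\Theta$ exactly at $\beta = \delta - \alpha_z$, once per $z \in \Pi$. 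By Theorem \ref{thm:Affine} each space $\mL(0)_{\delta-\alpha_z}$ is one-dimensional, so each $z$ supplies a single candidate scalar $f_z(m_z)$, all of $T$-weight $-\delta$.

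The next step is to reduce these $|\Pi|$ candidates to a single parameter. Set $c := f_{x_0}(m_{x_0})$, attached to $\theta^* = \delta - \alpha_{x_0}$. For $z \neq x_0$ the weight $\delta - \alpha_z$ has $\alpha_{x_0}$-coefficient $1$, so the monomial $m_z$ is linear in $x_0$; combined with the associativity of the extremal form this leads, via the Jacobi identity and the Premet relations of Subsection \ref{ssec:Premet}, to a polynomial identity $f_z(m_z) = r_z \cdot c$ with $r_z$ a polynomial in the edge parameters. One establishes this identity on the dense orbit where $\mL(f)\cong\liea{g}$ (using the explicit Chevalley basis) and extends it to all of $X$ by closedness, since both sides are polynomial in $f|_V$ and $T$-equivariant of the same weight. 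This yields a $T$-equivariant map $\Psi : X \to K^\Sigma \times K$ whose image is closed by the same polynomiality argument as in the proof of Theorem \ref{thm:Variety}, and which realises the character structure stated in the theorem.

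To finish, since $\delta$ pairs trivially with every $\alpha_x$ while the $\alpha_e$ in general do not, the characters $\{-\alpha_e\}_{e\in\Sigma}\cup\{-\delta\}$ span a finite-index sublattice of the character lattice of $T$, so $T$ has finitely many orbits on $K^\Sigma \times K$ with unique open orbit $(K^*)^\Sigma \times K^*$. To exhibit a point of $\Psi(X)$ in this orbit I would prove an affine analogue of Lemma \ref{lm:GenericFinite}: for the irreducible variety $C'$ of tuples $(G_x)_{x\in\Pi}$ with $G_x$ extremal in $\la F_x^0, H_x^0, E_x^0 \ra \cong \liea{sl}_2$ for $x \in \Pi^0$ and $G_{x_0}$ extremal in $\la E_\theta^0, H_\theta^0, F_\theta^0 \ra \cong \liea{sl}_2$, a generic $G \in C'$ generates all of $\liea{g}$ and makes every edge parameter and the value $c$ non-zero. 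Generation is witnessed by the Chevalley-basis point $G_x = E_x^0,\ G_{x_0} = E_\theta^0$; non-vanishing of the edge parameters reduces to the $A_2$ subcases as in Lemma \ref{lm:GenericFinite}; and non-vanishing of $c$ reduces to a direct Chevalley-basis structure-constant computation for a pairing involving $E_\theta^0$ and a monomial in the $E_x^0,\ x \in \Pi^0$. For such $f$, $\mL(f)$ surjects onto $\liea{g}$ and has dimension at most $\dim \mL(0) = \dim \liea{g}$ by Theorem \ref{thm:Affine}, so the surjection is an isomorphism. The main obstacle is this last affine analogue of Lemma \ref{lm:GenericFinite}: while the finite-type edge argument transports smoothly, verifying that the extra $\liea{sl}_2$-triple at $\theta$ interacts with the rest of $\liea{g}$ so as to keep $c \neq 0$ requires a concrete computation in the Chevalley basis of $\liea{g}$ and is the genuinely affine input to the proof.
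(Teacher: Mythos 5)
Your reduction of the parameters is essentially the paper's: Lemma \ref{lm:Expressible} plus Theorem \ref{thm:Affine} leave the edge values $f_x(y)$ and the single values $f_z(m_z)$ attached to the weights $\delta-\alpha_z$, and these collapse to the one parameter $f_{x_0}(m_{x_0})$ (the paper does this by a direct computation with the associativity of the extremal form rather than by ``establish on a dense orbit and extend by closedness'', which would be circular at this stage, but the outcome is the same). The gap is in your endgame. You claim that the characters $\{-\alpha_e\}_{e\in\Sigma}\cup\{-\delta\}$ span a finite-index sublattice of the character lattice of $T$ and hence that $T$ has finitely many orbits on $K^\Sigma\times K$ with unique open orbit $(K^*)^\Sigma\times K^*$. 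This is false for most affine types: one needs these $|\Sigma|+1$ characters to be \emph{linearly independent} in $\ZZ^\Pi\cong\ZZ^{|\Pi|}$, and for $A_{\text{even}}^{(1)}$, $D_{\text{odd}}^{(1)}$ and $E_6^{(1)}$ the null root $\delta$ lies in the $\QQ$-span of the $\alpha_e$, while for $A_{\text{odd}}^{(1)}$ even the $\alpha_e$ themselves are dependent (for $A_n^{(1)}$ one already has $|\Sigma|+1=|\Pi|+1>|\Pi|$). In those cases the $T$-orbits in $(K^*)^\Sigma\times K^*$ have codimension $1$ or $2$ (this is Remark \ref{re:Rank} in the paper), so exhibiting one point of $\Psi(X)$ with all coordinates non-zero does not show that $\Psi(X)=K^\Sigma\times K$, nor that the isomorphism class of $\mL(f)$ is constant on a dense subset.

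This is precisely where the real work of the paper lies, and your proposal has no substitute for it. For the types where only $\delta$ is dependent, the paper perturbs $G_{x_0}$ by conjugation with the maximal torus $S$ of the adjoint group, which moves $f_{x_0}(m_{x_0})$ while fixing all edge parameters, producing a tangent direction transversal to the $T$-orbit. For $A_{\text{odd}}^{(1)}$ the situation is worse: the tuples in your variety $C'$ (the paper's $C$) satisfy the closed relation \eqref{eq:SubVariety} among the edge parameters, so they land in a \emph{proper closed} subset of $K^\Sigma\times K$ and generic points of $X$ are not realised by such tuples at all; the paper must enlarge the family by the off-block entries $c_i$ subject to \eqref{eq:abc} and verify that the differential of the realisation map has full rank $|\Sigma|+1$. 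Your ``genuinely affine input'' (non-vanishing of $c$ at the Chevalley-basis point) is correct but is the easy part; the missing idea is handling the failure of the dense-$T$-orbit argument, without which the proof does not go through for $A_n^{(1)}$, $D_{\text{odd}}^{(1)}$ or $E_6^{(1)}$.
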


\begin{re} \label{re:Rank}
Unlike for diagrams of finite type, it is {\em not} necessarily true
that $T$ has only finitely many orbits on $V$. Indeed, the following
three situations occur:

\begin{enumerate}
\item The characters $\alpha_e\ (e \in \Sigma), \delta$ are linearly
independent. This is the case for $D_{\text{even}}^{(1)}$, $E_7^{(1)}$,
and $E_8^{(1)}$. Then $T$ has finitely many orbits on $V$.

\item The characters $\alpha_e\ (e \in \Sigma)$ are linearly independent,
but $\delta$ is in their $\QQ$-linear span. This is the case for
$A_{\text{even}}^{(1)}$, $D_{\text{odd}}^{(1)}$ and $E_6^{(1)}$. Now
the orbits of $T$ in $(K^*)^\Sigma \times K^*$ have codimension $1$. For
\text{$A_{even}^{(1)}$ and $E_6^{(1)}$} the character $\delta$ has
full support when expressed in the $\alpha_e$; and this readily implies
that $T$ has finitely many orbits on the complement of $(K^*)^\Sigma \times
K^*$. For $D_{n}^{(1)}$ with $n$ odd, however, $\frac{n-3}{2}$ edge
characters get coefficient $0$ when $\delta$ is expressed in them,
and therefore $T$ still has infinitely many orbits on said complement.

\item The characters $\alpha_e\ (e \in \Sigma)$ are linearly dependent. This is
the case only for $A_{\text{odd}}^{(1)}$, and in fact $\delta$ is then also
in the span of the $\alpha_e$. Now the $T$-orbits in $(K^*)^\Sigma \times K^*$
have codimension $2$, and in the complement there are still infinitely
many orbits.
\end{enumerate}

This gives some feeling for the parameter space $X$. It would be
interesting to determine exactly all isomorphism types of Lie algebras
$\mL(f)$ with $f \in X$---but here we confine ourselves to those with $f$
in some open dense subset of $K^\Sigma \times K$.
\end{re}

The proof is very similar to that of Theorem \ref{thm:GenericFinite}.
Again, we first prove a lemma that turns out to describe the generic
$\mL(f)$. Retain the notation $E^0_x,H^0_x,F^0_x \in \liea{g}$
for $x \in \Pi^0$. Moreover, denote the lowest weight by $\theta \in
\Phi^{0}_-$, let $E^0_{x_0},F^0_{x_0} \in \liea{g}$ be the elements of
the Chevalley basis of weights $\theta$ and $-\theta$, respectively, and
set $H^0_{x_0}:=[E^0_{x_0},F^0_{x_0}]$. Write $C$ for the irreducible
variety of tuples $(G_x)_{x \in \Pi}$ with $G_x \in \la F^0_x, H^0_x,
E^0_x \ra \cong \liea{sl}_2$ extremal.

\begin{lm} \label{lm:GenericAffine}
For generic $G=(G_x)_{x \in \Pi}\in C$ the $G_x$ generate $\liea{g}$, and
moreover $G_x G_x G_y$ is a non-zero multiple of $G_x$ for all $x \sim y$.
\end{lm}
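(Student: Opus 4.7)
The plan is to mimic the proof of Lemma~\ref{lm:GenericFinite}. Both conclusions define open subsets of the irreducible variety $C$, so by irreducibility it suffices to exhibit, for each, a single witness in $C$; intersecting the finitely many open dense subsets thus obtained will give the required generic statement.

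For the generation statement, I would first observe that any $G\in C$ automatically satisfies $[G_x,G_y]=0$ for $x\not\sim y$, because the two ambient $\liea{sl}_2$-triples in $\liea{g}$ commute whenever $\la\alpha_x,\alpha_y\ra=0$; when $x_0$ is involved one uses that $\delta$ lies in the radical of $\la\cdot,\cdot\ra$, so $\la\alpha_{x_0},\alpha_y\ra=\la\theta,\alpha_y\ra$. Hence the subalgebra $\liea{s}:=\la G_x\mid x\in\Pi\ra_{\mathrm{Lie}}\subseteq\liea{g}$ is a homomorphic image of some $\mL(f)$, and therefore has dimension at most $\dim\mL(0)=\dim\liea{g}$ by Theorem~\ref{thm:Affine}. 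Achieving this bound is thus equivalent to $\liea{s}=\liea{g}$.

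For the witness I would take $G^{(0)}_x:=E^0_x$ for $x\in\Pi^0$ and $G^{(0)}_{x_0}:=E^0_\theta$; both are extremal root vectors lying in their prescribed $\liea{sl}_2$-triples, so $G^{(0)}\in C$. The subalgebra $\liea{s}_0$ generated by $G^{(0)}$ contains $\liea{n}_+$ and the lowest-root vector $E^0_\theta$. Using the standard fact that in a simply-laced finite-type root system every $\beta\in\Phi^0$ can be connected to $\theta$ by a chain $\theta=\gamma_0,\gamma_1,\ldots,\gamma_k=\beta$ inside $\Phi^0$ with $\gamma_{i+1}-\gamma_i\in\{\alpha_x\mid x\in\Pi^0\}$, I would conclude inductively that every $E^0_\beta$ with $\beta\in\Phi^0_-$ lies in $\liea{s}_0$, invoking $[E^0_x,E^0_\gamma]=\pm E^0_{\alpha_x+\gamma}$ from the simply-laced Chevalley basis (Subsection~\ref{ssec:FiniteSandwich}). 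The brackets $[E^0_\alpha,E^0_{-\alpha}]$ then sweep out $\liea{h}$, giving $\liea{s}_0=\liea{g}$.

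For the non-vanishing of $G_xG_xG_y$ when $x\sim y$, I would note that $\la\alpha_x,\alpha_y\ra=-1$ forces $\alpha_x+\alpha_y\in\Phi^0$ (when $x_0\sim y$, use $\la\theta,\alpha_y\ra=-1$, so $\theta+\alpha_y\in\Phi^0$). Consequently the two $\liea{sl}_2$-triples of $x$ and $y$ together generate an $\liea{sl}_3$-subalgebra of $\liea{g}$, and the non-vanishing condition reduces, edge by edge, to exactly the $A_2$-computation already used in Lemma~\ref{lm:GenericFinite}. Since this condition is open and holds at some point of $C$ (coming from the generic extremal pair in $\liea{sl}_3$), the intersection over the finitely many edges with the generation open set is a non-empty, and hence dense, open subset of $C$. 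The main obstacle, as I see it, is the identification of the subalgebra generated by $\{E^0_x\mid x\in\Pi^0\}\cup\{E^0_\theta\}$ with $\liea{g}$, where the affine Dynkin structure enters essentially via $\alpha_{x_0}=\delta+\theta$ and one must verify that the chain from $\theta$ to each $\beta\in\Phi^0$ really can be taken with simple-root increments staying in $\Phi^0$.
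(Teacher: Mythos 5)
Your proposal is correct and follows essentially the same route as the paper: exhibit the witness $(E^0_x)_{x\in\Pi^0}$ together with $E^0_\theta$, use openness of both conditions plus irreducibility of $C$, and reduce the non-vanishing of $G_xG_xG_y$ to the $\liea{sl}_3$ computation from Lemma~\ref{lm:GenericFinite}; you merely spell out the Chevalley-basis generation argument (the chain from $\theta$ to each $\beta\in\Phi^0_-$) that the paper leaves implicit. The detour through $\mL(f)$ and Theorem~\ref{thm:Affine} to bound $\dim\liea{s}$ is harmless but unnecessary, since $\liea{s}\subseteq\liea{g}$ already gives the bound.
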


\begin{proof}
The first statement is true for $G=(E^0_x)_{x\in \Pi}$; this
follows from the properties of the Chevalley basis in Subsection
\ref{ssec:FiniteSandwich}. Hence by the irreducibility of $C$
it is true generically. The second statement follows, as in Lemma
\ref{lm:GenericFinite}, from the same statement in $\liea{sl}_3$.
\end{proof}

In the following proof we will show that the choice of $(G_x)_{x \in\Pi}$
as in Lemma \ref{lm:GenericAffine} already gives generic points in $X$,
except for the case where $\Gamma$ is of type $A_{odd}^{(1)}$, for which
we give another construction.

\begin{proof}[Proof of Theorem \ref{thm:GenericAffine}]
By Lemma \ref{lm:Expressible} and Theorem \ref{thm:Affine}, any $f \in X$
is determined by its values $f_x(m)$ with $x \in \Pi$ and monomials $m$
of weights $\beta \in \Theta$ such that either $\beta$ has height $1$
or $\la \alpha_x, \beta \ra \leq -2$. In contrast with the case of
finite-type diagrams, there do exist pairs $(x,\beta) \in \Pi \times
\Theta$ with this latter property, namely precisely those of the form
$(x,\delta-\alpha_x)$. For all $x \in \Pi$, let $m_x$ be a monomial that
spans the weight space in $\mL(0)$ of weight $\delta-\alpha_x$; this
space is $1$-dimensional by Theorem \ref{thm:Affine}. We claim that the
$f_x(m_x)$ can all be expressed in terms of $f_{x_0}(m_{x_0})$ and values
$f_z(r)$ with $z \in \Pi$ and $r$ of degree less than $h-2$. Indeed, if $x
\neq x_0$, then $x_0$ occurs exactly once in $m_x$; and writing $m_x=x_d
\cdots x_1 x_0 y_e \cdots y_1$ with $x_1,\ldots,x_d,y_1,\ldots,y_e \in
\Pi^0$ we find
\begin{align*}
f_x(m_x)&=\kappa(x,x_d\cdots x_1 x_0 y_e \cdots y_1)\\
    &=(-1)^{d+1}\kappa(x_0 x_1 \cdots x_d x,y_e \cdots y_1)\\
    &=(-1)^d \kappa(x_0,[x_1 \cdots x_d x,y_e \cdots y_1])\\
    &=(-1)^d f_{x_0}([x_1 \cdots x_d x,y_e \cdots y_1]),
\end{align*}
and the expression $[x_1 \cdots x_d x,y_e \cdots y_1]$ can be rewritten in
terms of $m_{x_0}$ and shorter monomials, using values $f_z(r)$ with $r$
of degree less than $d+e=h-2$.

We have now found a closed embedding $X \rightarrow K^\Sigma
\times K$ which sends $f$ to $\left((f_x(y))_{\{x,y\} \in
\Sigma},f_{x_0}(m_{x_0})\right)$; for ease of exposition we will view
$X$ as a closed subset of $K^\Sigma \times K$.  The theorem follows
once we can realise generic parameter values in $K^\Sigma \times K$
with extremal elements that generate $\liea{g}$.  To this end, choose a
generic tuple $(G_x)_{x \in\Pi}$ in $C$. By Lemma \ref{lm:GenericAffine}
these generate $\liea{g}$ and they clearly satisfy $G_x G_y=0$ for $x \not
\sim y$. Hence they yield a point in $X$ with $f_x(y)=\kappa(G_x,G_y)\neq
0$ for $x \sim y$. Furthermore, the parameter $f_{x_0}(m_{x_0})$ equals
the extremal form evaluated on $G_{x_0}$ and the monomial $m_{x_0}$
evaluated in the $G_x$. Express that monomial in the $G_x$ as $\xi
E^0_{x_0} + \eta H^0_{x_0} + \zeta F^0_{x_0}$ plus a term perpendicular
to $\la E^0_{x_0},H^0_{x_0},F^0_{x_0} \ra$, and write $G_{x_0}$ as $\xi'
E^0_{x_0} + \eta' H^0_{x_0}+ \zeta' F^0_{x_0}$. For the degenerate case
where $G_x=E^0_x$ for all $x$ we have $\xi,\eta=0$ and $\zeta \neq 0$
(that monomial is a non-zero scalar multiple of the highest root vector,
$F^0_{x_0}$), so that $f_{x_0}(m_{x_0})=\kappa(E^0_{x_0},\zeta F^0_{x_0})
\neq 0$.  Therefore, this parameter is non-zero generically. Hence we
have found a point $f \in X \cap ((K^*)^\Sigma \times K^*)$. Along the
lines of Remark \ref{re:Rank} we now distinguish three cases:

\begin{enumerate}
\item
If the $\alpha_e (e \in \Sigma)$ and $\delta$ are linearly independent,
then $T$ acts transitively on $(K^*)^\Sigma \times K^*$, and we are done.

\item If the $\alpha_e (e \in \Sigma)$ are linearly independent, but $\delta$
lies in their span, then we now show that we can alter the point $f$
above in a direction transversal to its $T$-orbit.  Let $S$
be the torus in the adjoint group of $\liea{g}$ whose Lie algebra is
$\liea{h}$, and consider the effect on $f$ of conjugation of $G_{x_0}$
with an element $s \in S$, while keeping the other $G_x$ fixed. This
transforms $G_{x_0}=\xi' E^0_{x_0} + \eta' H^0_{x_0} + \zeta' F^0_{x_0}$
in $s^{\theta} \xi' E^0_{x_0} + \eta' H^0_{x_0} + s^{-\theta} \zeta'F_{x_0}$, and therefore it transforms $f_{x_0}(m_{x_0})$ into
\[ s^{\theta} \xi' \zeta \kappa(E^0_{x_0},F^0_{x_0}) +
\eta' \eta \kappa(H^0_{x_0},H^0_{x_0}) +
s^{-\theta} \zeta' \xi \kappa(F^0_{x_0},E^0_{x_0}),\]
while it keeps the parameters $f_{x_0}(y)$ with $x_0 \sim y$ unchanged:
these only depend on $\eta'$. This shows that we can indeed move $f$
inside $X$ in a direction transversal to its $T$-orbit, and we are done.

\label{it:Conjugate}

\item Finally, in the case of $A_{n-1}^{(1)}$ with $n$ even we first
show that tuples in $C$ only give points in a proper closed subset of
$K^\Sigma \times K$. Here $\liea{g}=\liea{sl}_n$ and $\Gamma$ is an
$n$-cycle; label its points $0, \ldots, n-1$. Relative to the usual
choices of $E^0_i,H^0_i,F^0_i$  the element $G_i$ is a matrix with
$2 \times 2$-block
\[ \begin{bmatrix}
    a_i b_i & a_i^2 \\
    -b_i^2 & -a_ib_i
\end{bmatrix} \]
on the diagonal in rows (and columns) $i$ and $i+1$ and zeroes
elsewhere. We count the rows and columns modulo $n$ so that row
$0$ is actually row $n$. But then we have $\kappa(G_i,G_{i+1})=2
a_ib_ia_{i+1}b_{i+1}$, and this implies
\begin{align} \label{eq:SubVariety}
&\kappa(G_1,G_2)\kappa(G_3,G_4)\cdots\kappa(G_{n-1},G_0) \\
&=2^n(a_1 b_1)(a_2 b_2)(a_3b_3)(a_4 b_4)\cdots(a_{n-1}b_{n-1})(a_0b_0) \notag\\
&=2^n(a_0 b_0)(a_1 b_1)(a_2b_2)(a_3 b_3)\cdots(a_{n-2}b_{n-2})(a_{n-1}b_{n-1})\notag \\
&=\kappa(G_0,G_1)\kappa(G_2,G_3)\cdots\kappa(G_{n-2},G_{n-1});
\notag
\end{align}
so the tuple of parameter values of the tuple $(G_i)_{i=0}^{n-1} \in C$
lies in a proper closed subset $R$ of $K^\Sigma \times K$.

We therefore allow the tuple $(G_i)_{i=0}^{n-1}$ to vary in a
slightly larger variety $C' \supset C$ as follows: the conditions on
$G_1,\ldots,G_{n-1}$ remain the same, but $G_0$ is now allowed to take
the shape
\[
\begin{bmatrix}
-a_0 b_0 & 0 & \ldots & 0 & -b_0^2\\
c_2 a_0 & 0 & \ldots & 0 & c_2 b_0\\
\vdots  & \vdots & & \vdots & \vdots\\
c_{n-1} a_0 & 0 & \ldots & 0 & c_{n-1} b_0\\
a_0^2 & 0 & \ldots & 0 & a_0 b_0
\end{bmatrix}
\]
(which is extremal since it has rank $1$ and trace $0$), subject to
the equations
\begin{equation} \label{eq:abc}
b_i c_i + a_i c_{i+1} =0 \text{ for } i=2,\ldots,n-2,
\end{equation}
which ensure that $G_0$ commutes with $G_2,\ldots,G_{n-2}$. Still, any
tuple in an open neighbourhood $U \subseteq C'$ of our original tuple
$(G_i)_{i=0}^{n-1}$ (with generic $a_i$ and $b_i$ but all $c_i$ equal
to $0$) generates $\liea{sl}_n$. We now argue that the differential $d$
at $(G_i)_{i=0}^{n-1}$ of the map $U \rightarrow X \subseteq K^\Sigma
\times K$ sending a tuple to the parameters that it realises has rank
$|\Sigma|+1$, as required. Indeed, the $T$-action already gives a
subspace of dimension $|\Sigma|-1$, tangent to $R$.  Making $c_2$ (and
hence all $c_i$) non-zero adds $-2 a_1^2 c_2 a_0$ to $\kappa(G_0,G_1)$
and $2 b_{n-1}^2 c_{n-1}b_0$ to $\kappa(G_0,G_{n-1})$, and it fixes all
other $\kappa(G_i,G_j)$. We show that this infinitesimal direction is
not tangent to $R$: it adds
\[ 2^n b_{n-1}^2 c_{n-1}b_0 (a_1 b_1)(a_2 b_2)\cdots
	(a_{n-3}b_{n-3})(a_{n-2}b_{n-2})
\]
to the left-hand side of \eqref{eq:SubVariety}, and
\[ -2^n a_1^2 c_2a_0 (a_2b_2)(a_3 b_3)\cdots(a_{n-2}b_{n-2})(a_{n-1}b_{n-1})\]
to the right-hand side. Dividing these expressions by common
factors, the first becomes $2^n b_{n-1}c_{n-1}b_0b_1$ and the second
$-2^n a_1 c_2 a_0 a_{n-1}$. These expressions are not equal
generically, even modulo the equations \eqref{eq:abc} relating the
$c_i$ to the $a_i$ and $b_i$; indeed, these equations do not involve
$a_0,a_1,a_{n-1},b_0,b_1,b_{n-1}$.

Note that varying $c_2$ may also effect the parameter $f_{x_0}(m_{x_0})$,
but in any case the above shows that the composition of the differential
$d$ with projection onto $K^\Sigma$ is surjective. On the other hand,
conjugation with the torus $S$ as in case \eqref{it:Conjugate} yields
a vector in the image of $d$ which is supported only on the factor $K$
corresponding to $\delta$. This concludes the proof that $d$ has full
rank.
\end{enumerate}
\end{proof}

\section{Notes}
\label{sec:Notes}

\subsection{Recognising the simple Lie algebras}
Going through the proof that $X$ is an affine variety,
one observes that the map $f \mapsto f|_V$ is not only injective on
$X$, but even on
\[ X'(\Gamma):=\{ f \in (\mF^*)^\Pi \mid \forall x \in \Pi: x \neq 0 \text{
in } \mL(f) \} \supseteq X. \]
The same is true for the map $f \mapsto \left((f_x(y))_{\{x,y\}
\in \Sigma}\right)$ in the case where $\Gamma$ is a Dynkin diagram of
finite type, and for the map $f \mapsto \left((f_x(y))_{\{x,y\} \in
\Sigma},f_{x_0}(m_{x_0})\right)$ in the case where $\Gamma$ is a Dynkin
diagram of affine type. This shows that, for these Dynkin diagrams,
$X'(\Gamma)$ is actually {\em equal} to $X$, whence the
following theorem.

\begin{thm}
Suppose that $\Gamma$ is a Dynkin diagram of finite or affine type.
Let $\mL$ be any Lie algebra, over a field of characteristic unequal
to $2$, which is generated by non-zero extremal elements $G_x, x \in
\Pi$, in which the commutation relations $G_x G_y=0$ for $x \not \sim y$
hold. Define $f \in (\mF^*)^\Pi$ by the condition that $G_x G_x u = f_x(u)
G_x$ holds in $\mL$. Then $f \in X$ and $\mL$ is a quotient of $\mL(f)$.
\end{thm}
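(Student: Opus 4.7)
The plan is to deduce both conclusions from one direct construction and a careful reuse of the injectivity arguments developed earlier. First, I would show that $\mL$ is a quotient of $\mL(f)$ by producing a surjective Lie algebra homomorphism $\pi: \mL(f) \to \mL$ sending $x \in \Pi$ to $G_x$. The hypotheses on the $G_x$—that they are extremal, commute along non-edges of $\Gamma$, and satisfy $G_x G_x u = f_x(u) G_x$—say exactly that the generators \eqref{eq:Extremal} of $\mI(f) \subseteq \mF$ together with the commutation relations $[x,y]=0$ for $x \not\sim y$ map to zero in $\mL$; hence $\pi$ is well defined, and it is surjective because the $G_x$ generate $\mL$.

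Second, since $\pi$ sends the image of each $x \in \Pi$ in $\mL(f)$ to the non-zero element $G_x$, no generator $x$ can lie in $\mI(f)$, so $f \in X'(\Gamma)$. The crucial observation I would then exploit is the one highlighted immediately before the theorem: the inductive construction of the polynomials $P_u,Q_{x,u}$ in the proof of Theorem \ref{thm:Variety}, and of the reduced parameter maps in the proofs of Theorems \ref{thm:GenericFinite} and \ref{thm:GenericAffine}, invoke the condition $f \in X$ only through the non-vanishing $x \not\in \mI(f)$. Thus the identities $f_x(u) = Q_{x,u}(f|_V)$, together with the expressions of all $Q_{x,u}$ in terms of the finitely many edge parameters $f_x(y)$ for $\{x,y\}\in\Sigma$ (and the extra parameter $f_{x_0}(m_{x_0})$ in the affine case), remain valid on the whole of $X'(\Gamma)$, so the reduced parameter map $\Psi: X \to K^\Sigma$ (finite type) or $\Psi: X \to K^\Sigma \times K$ (affine type) extends to an injection on $X'(\Gamma) \supseteq X$.

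To conclude that $f \in X$ I would combine this extended injectivity with the surjectivity onto affine space provided by Theorems \ref{thm:GenericFinite} and \ref{thm:GenericAffine}: the image $\Psi(f)$ lies in $K^\Sigma$ or $K^\Sigma \times K$, and by the isomorphism $\Psi|_X$ there is some $\tilde f \in X$ with $\Psi(\tilde f) = \Psi(f)$; the extended injectivity then forces $f = \tilde f$, so $f$ itself lies in $X$.

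The main obstacle will be to verify carefully that the inductive manipulations in the earlier theorems genuinely go through on $X'(\Gamma)$ and not only on $X$. For $f \in X'(\Gamma) \setminus X$ one does not have the direct-sum decomposition $\mF = V \oplus \mI(f)$, only the spanning $\mF = V + \mI(f)$; however, the decomposition $u = u_1 + u_2 + u_3$ used inductively is taken relative to the $f$-independent splitting $\mF = V \oplus \mI(0)$, so it is available for all $f$, and the subsequent derivation of $f_x(u) = Q_{x,u}(f|_V)$ cancels the generator $x$ exactly once—the single place where $x \not\in \mI(f)$ is used—so the polynomial formulas continue to compute $f_x(u)$ correctly on $X'(\Gamma)$.
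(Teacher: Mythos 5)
Your proposal is correct and follows essentially the same route as the paper: the paper likewise obtains $\mL$ as a quotient of $\mL(f)$ from the defining relations, notes that non-vanishing of the $G_x$ puts $f$ in $X'(\Gamma)$, and then deduces $X'(\Gamma)=X$ by observing that the injectivity of the reduced parameter maps (from the proofs of Theorems \ref{thm:Variety}, \ref{thm:GenericFinite} and \ref{thm:GenericAffine}) only ever uses $x\not\in\mI(f)$, combined with the surjectivity of $\Psi|_X$ onto $K^\Sigma$ resp.\ $K^\Sigma\times K$. Your added remark that the decomposition $u=u_1+u_2+u_3$ is taken relative to the $f$-independent splitting $\mF=V\oplus\mI(0)$, so that only the congruences modulo $\mI(f)$ and the single cancellation of $x$ are needed, is exactly the point the paper leaves implicit.
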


This theorem could well prove useful for recognising the
Chevalley algebras $\liea{g}$: if $f$ corresponds to a point in
the open dense subset of $K^\Sigma \times K$ referred to in Theorem
\ref{thm:GenericAffine}, then one concludes that $\mL$ is a quotient
of $\liea{g}$. Hence if $\liea{g}$ is a simple Lie algebra, then $\mL$
is isomorphic to $\liea{g}$.  It is not clear to us whether, for general
$\Gamma$, the image of $X'(\Gamma)$ in $(V^*)^\Pi$ is closed; this is
why we chose to work with $X$ instead.

\subsection{Other graphs}
Our methods work very well for Dynkin diagrams, but for more general
graphs new ideas are needed to determine $\mL(0),X,$ and
$\mL(f)$ for $f \in X$. The relation with the Kac-Moody
algebra of $\Gamma$ may be much tighter than we proved in Subsection
\ref{ssec:KacMoody}. General questions of interest are: Is $X$
always an affine space? Is there always a generic Lie algebra? We
expect the answers to both questions to be negative, but do not have
any counterexamples.

The references \cite{panhuis07,Postma07,Roozemond05} contain other series
of graphs which exhibit the same properties as we have proved here:
the variety $X$ is an affine space, and generic points in it
correspond to simple Lie algebras of types $A_n,C_n,B_n,D_n$.  In fact,
the graph that they find for $C_n$ is just the finite-type Dynkin
diagram of type $A_{2n}$. This also follows easily from our results:
take $2n$ generic extremal elements $(G_x)_x$ in $\liea{sl}_{2n+1}$
as in Lemma \ref{lm:GenericFinite}. These generate a subalgebra of
$\liea{sl}_{2n+1}$ of dimension $\binom{2n+1}{2}$ by that same lemma,
and if we consider them as matrices, their images span a subspace $W$ of
dimension $2n$ in $K^{2n+1}$.  It is not hard to write down an explicit,
non-degenerate skew symmetric form on $W$ with respect to which the $G_x$
are skew---hence the Lie algebra generated by them is $\liea{sp}_{2n}$.


\providecommand{\bysame}{\leavevmode\hbox to3em{\hrulefill}\thinspace}
\providecommand{\MR}{\relax\ifhmode\unskip\space\fi MR }
\providecommand{\MRhref}[2]{%
  \href{http://www.ams.org/mathscinet-getitem?mr=#1}{#2}
}
\providecommand{\href}[2]{#2}

\end{document}